\newtheorem{thm}{Theorem}[section]
\newtheorem{lem}[thm]{Lemma}
\newtheorem{cor}[thm]{Corollary}
\newtheorem{defi}[thm]{Definition}
\newtheorem{remark}[thm]{Remark}
\newtheorem*{proposition*}{Proposition}
\numberwithin{equation}{section}
\newcommand{\norm}[1]{\lVert #1 \rVert}
\title{Determining habitat anomalies in cross-diffusion predator-prey chemotaxis models}
\author[1,*]{Yuhan Li}
\author[1,$\dagger$]{Hongyu Liu}
\author[2,$\natural$]{Catharine W. K. Lo}
\affil[1]{Department of Mathematics, City University of Hong Kong, Hong Kong, P. R. China}
\affil[2]{School of Mathematical Sciences, Shenzhen University, Shenzhen, 518061, P.R. China}
\affil[*]{yuhli2-c@my.cityu.edu.hk}
\affil[$\dagger$]{hongyliu@cityu.edu.hk, hongyu.liuip@gmail.com}
\affil[$\natural$]{catharinelowk@gmail.com, cwklo@szu.edu.cn}
\date{}
\begin{document}
\maketitle

\begin{abstract}
This paper addresses an open inverse problem at the interface of mathematical analysis and spatial ecology: the unique identification of unknown spatial anomalies -- interpreted as zones of habitat degradation -- and their associated ecological parameters in multi-species predator-prey systems with multiple chemical signals, using only boundary measurements. We formulate the problem as the simultaneous recovery of an unknown interior subdomain and the discontinuous ecological interaction rules across its boundary. A unified theoretical framework is developed that uniquely determines both the anomaly’s geometry and the discontinuous coefficients characterizing the altered interactions within the degraded region. Our results cover smooth anomalies in time-dependent systems and are extended to non-smooth polyhedral inclusions in stationary regimes. This work bridges a gap between ecological sensing and the quantitative inference of internal habitat heterogeneity, offering a mathematical basis for detecting and characterizing habitat degradation from limited external data.
~\\\\
\textbf{Keywords:} Inverse boundary value problems; multi-species cross-diffusion predator-prey chemotaxis models; habitat degradation; anomaly detection; unique identifiability.~\\
\textbf{2020 Mathematics Subject Classification:} 35R30, 35Q92, 92-10, 92D25, 35R35
\end{abstract}

{\centering \section{Introduction}  \label{sec:tp_intro} }
\subsection{Biology background and motivation of our study}\label{sec:tp_BioBack}
The intricate dynamics governing how cohabiting species interact with each other and their heterogeneous environments represent a cornerstone of spatial ecology. Real-world populations are distributed across landscapes fragmented by spatial anomalies such as zones of habitat degradation, where resource availability, toxicity, or physical structure differ sharply from the surrounding environment. To capture the resulting dynamics, models must account for both random dispersal and the directed movement of organisms in response to environmental cues across these discontinuous domains.

A fundamental mechanism for such directed movement is chemotaxis \cite{Grunbaum1998, cantrell2004spatial, Cantrell2020}, the ability to sense and move along chemical gradients. This process is central to diverse ecological functions and is acutely sensitive to habitat degradation, which can disrupt chemical signaling pathways. For instance, in soil ecosystems, chemotaxis drives essential microbial processes like nitrogen fixation and denitrification, where contamination or pH shifts can sever these chemical dialogues. Similarly, in the mammalian gut, chemotaxis underpins the complex spatial organization of the microbiota; inflammation or antibiotic exposure can degrade this chemical landscape, leading to dysbiosis. By integrating such taxis mechanisms, models bridge the scale from local individual interactions to the emergent, large-scale spatial patterns -- and their disruption -- observed in nature.

A powerful framework that integrates interspecies dynamics with spatial movement is the predator-prey chemotaxis system, which models interactions where movement is directed by chemical stimuli, typically a chemoattractant produced by the prey. An illustrative formulation, extending both the classic Lotka-Volterra predator-prey framework and the Keller-Segel chemotaxis paradigm \cite{Keller1970, Tello2012, Black2016, Stinner2014}, is given by:
\begin{equation}\label{eq:0ppc_intro}
    \begin{cases}
        \partial_t u = d_u \Delta u - \chi \nabla \cdot (u \nabla w) - a u v,\\
        \partial_t v = d_v \Delta v + b u v - c v,\\
        \partial_t w = d_w \Delta w + d u - e w,
    \end{cases}
\end{equation}
where \( u(x, t) \), \( v(x, t) \), and \( w(x, t) \) denote the densities of prey, predator, and chemoattractant, respectively. The parameters \( d_u, d_v, d_w \) are diffusion coefficients; \( \chi \) is the prey's chemotactic sensitivity; and \( a, b, c, d, e \) represent predation rate, predator growth, predator mortality, chemoattractant production, and its degradation. Critically, in a degraded zone, parameters such as the predation rate \( a \) or chemotactic sensitivity \( \chi \) may change discontinuously, modeling a sharp shift in ecological rules.

In this system, the dynamics has been further enriched by incorporating prey-taxis, which describes directed movement of prey in response to predator gradients or conspecifics \cite{Eisenbach2004,SB2009, RCP2011, Zhangglobal, Lin2024}. This mechanism, introduced by \cite{Kareiva1987} and developed by \cite{Grunbaum1998}, is crucial for modeling adaptive anti-predator strategies. For example, predatory bacteria like \emph{Bdellovibrio} exhibit prey-taxis towards chemical signals from \emph{Escherichia coli} prey. Such behavior can be modeled by adding a term \( \chi \nabla \cdot (u \nabla u) \) for self-aggregation or \( -\chi \nabla \cdot (u \nabla v) \) for predator-avoidance. These strategies are themselves vulnerable to disruption in degraded habitats.

Beyond taxis, cross-diffusion provides another essential mechanism for spatial pattern formation. Pioneered by Shigesada, Kawasaki, and Teramoto \cite{Shigesada1979}, it generalizes dispersal by allowing the population gradient of one species to directly influence the movement of another:
\begin{equation*}
    \begin{cases}
        \partial_t u = \nabla \cdot (d_u \nabla u + d_{1} \nabla v),\\
        \partial_t v = \nabla \cdot (d_v \nabla v + d_{2} \nabla u).
    \end{cases}
\end{equation*}
Here, coefficients \( d_1 \) and \( d_2 \) quantify interspecific dispersal pressures. Their sign is ecologically significant: negative values typically induce aggregation, while positive values promote segregation. In a degraded anomaly, these cross-diffusive couplings may flip sign or change magnitude, representing a breakdown or reversal of typical spatial interactions \cite{Luo2024, Dubey2001}. These dynamics are not merely theoretical; they are observed in complex microbial ecosystems, such as within biofilms, where interspecific chemical signaling can lead to attraction or repulsion, effectively manifesting as cross-diffusion and resulting in structured spatial organization.

In summary, these models form a sophisticated toolkit for forward modeling of population dynamics under spatial heterogeneity \cite{li2025unveiling}. The pervasive nature of habitat degradation and other spatial anomalies necessitates robust methods not only to model their effects but, crucially, to detect and characterize them non-invasively. However, the corresponding inverse problem -- deducing such anomalous subregions with discontinuous ecological rules from externally observed data -- remains a significant open challenge in spatial ecology.

This leads to the central question of this work: \emph{How can one uniquely identify and localize a subregion of habitat degradation within a domain, and simultaneously quantify the discontinuous ecological interaction rules inside it, using only population data measured at the domain's boundary over time?}

In this paper, we address this inverse problem for a class of spatially extended ecological systems featuring cross-diffusion and taxis. Our objective is the simultaneous recovery of both the geometric shape of an internal anomaly -- representing a zone of habitat degradation -- and the set of altered ecological parameters within it. Mathematically, this is formulated as the unique determination of an unknown interior subdomain and the piecewise-constant or discontinuous coefficients that govern species interaction and movement across its boundary. This framework constitutes a significant departure from prior inverse problem studies in mathematical biology, which have largely been confined to the recovery of smoothly varying or globally constant parameters \cite{LLL1, LLL2025inverse, LLL2025pee}.

Our results establish a mathematical foundation for non-invasive ecological sensing. A direct application is the localization and characterization of a subsurface contaminant plume by observing the spatiotemporal response of microbial communities at the domain's boundary. More broadly, the methodology is generalizable to a range of scenarios requiring the identification of hidden internal structures from boundary data, such as delineating strongholds of invasive species, detecting the onset of harmful algal blooms, or mapping regions of acute environmental stress within complex ecosystems.

\subsection{Mathematical setup}\label{sec:tp_Setup}
Motivated by the works above, we consider the following multi-species, multi-chemical cross-diffusion predator-prey chemotaxis system with prey-taxis. Let the space-time domain be $Q := \Omega \times (0, T]$ and its lateral boundary $\Sigma:= \partial\Omega \times (0, T]$. The governing system is given by:
\begin{equation}\label{eq:tp_maingeneral}
    \begin{cases}
    \partial_t u_1(x,t)-d_1\Delta \left(u_1(x,t)+\sum_{j=1}^M\delta_{1j} u_1(x,t)v_j(x,t)\right)\\
    \qquad \qquad \qquad\qquad \qquad \qquad\qquad \qquad \qquad =G_1(x,t,u_1,\dots,u_N,v_1,\dots,v_M)&\ \text{in } Q ,\\
    \qquad \qquad \qquad \qquad \qquad \qquad\qquad \qquad \qquad\vdots  &\quad \vdots\\   
    \partial_t u_N(x,t)-d_N\Delta \left(u_N(x,t)+\sum_{j=1}^M\delta_{Nj} u_N(x,t)v_j(x,t)\right)\\
    \qquad \qquad \qquad \qquad \qquad \qquad\qquad \qquad \qquad=G_N(x,t,u_1,\dots,u_N,v_1,\dots,v_M) &\ \text{in } Q ,\\
    \partial_t v_1(x,t)-\delta_1 \Delta v_1(x,t)=\chi^{1}_{1}\nabla \cdot (v_1 \nabla u_1)+ \cdots + \chi^{N}_{1}\nabla \cdot (v_1 \nabla u_N)&\ \text{in } Q ,\\
    \qquad \qquad \qquad \qquad \qquad \qquad \vdots \qquad \qquad \qquad \qquad \qquad \qquad \qquad &\quad \vdots\\   
     \partial_t v_M(x,t)-\delta_M \Delta v_M(x,t)=\chi^{1}_{M}\nabla \cdot (v_M \nabla u_1)+ \cdots + \chi^{N}_{M}\nabla \cdot (v_M \nabla u_N)&\ \text{in } Q ,\\
     u_1(x,0)=f_1, \cdots, u_N(x,0)=f_N,\  v_1(x,0)=g_1, \cdots, v_M(x,0)=g_M,  &\  \text{in } \Omega.
    \end{cases}
\end{equation}

Here, $\mathbf{u} := (u_1, \dots, u_N)$ denotes the vector of chemical concentrations, and $\mathbf{v} := (v_1, \dots, v_M)$ denotes the vector of prey population densities. The parameters are defined as follows:  $d_i>0$ refers to diffusion coefficient of the $i$-th chemical, $\delta_j>0$ is intrinsic diffusion coefficient of the $j$-th prey species, and $\delta_{ij}>0$ represents cross-diffusion coefficient quantifying the influence of prey $v_j$ on the diffusion of chemical $u_i$. $\chi^i_j=\{0,1\}$ indicates whether the chemotactic movement of prey $v_j$  is directed by the gradient of chemical $u_i$. $G_i$ is reaction term governing the local production or consumption of the $i$-th chemical. The system couples Lotka–Volterra-type species–chemical interactions with Keller–Segel-type taxis mechanisms, capturing both interspecific dynamics and spatially oriented movement in response to chemical signals.

In the long time limit $T \to \infty$, the system approaches a steady state in which transient dynamics decay and temporal derivatives become negligible relative to the diffusive, advective, and reactive terms. Consequently, the densities of prey species and chemical concentrations converge to time-independent equilibrium profiles, reducing the system to its stationary form:
\begin{equation}\label{eq:tp_mainstationary}
    \begin{cases}    
    -d_1\Delta\left(u_1(x)+\sum_{j=1}^M\delta_{1j} u_1(x)v_j(x)\right)=F_1(x,u_1,\dots,u_N,v_1,\dots,v_M) &\quad \text{in } \Omega ,\\
    \qquad \qquad \qquad \qquad \qquad \qquad \qquad \qquad \vdots &\quad \vdots\\   
    -d_N\Delta \left(u_N(x)+\sum_{j=1}^M\delta_{Nj} u_N(x)v_j(x)\right)=F_N(x,u_1,\dots,u_N,v_1,\dots,v_M) &\quad \text{in } \Omega ,\\ 
    -\delta_1 \Delta v_1(x)=\chi^{1}_{1}\nabla \cdot (v_1 \nabla u_1)+ \cdots + \chi^{N}_{1}\nabla \cdot (v_1 \nabla u_N)&\quad \text{in } \Omega ,\\
    \qquad \qquad \qquad \vdots \qquad \qquad \qquad \qquad \qquad \qquad  &\quad \vdots\\   
     -\delta_M \Delta v_M(x)=\chi^{1}_{M}\nabla \cdot (v_M \nabla u_1)+ \cdots + \chi^{N}_{M}\nabla \cdot (v_M \nabla u_N)&\quad \text{in } \Omega ,\\
     u_1(x)=f_1, \cdots, u_N(x)=f_N, \ v_1(x)=g_1, \cdots, v_M(x)=g_M,  &\quad \text{on } \partial \Omega,
    \end{cases}
\end{equation}
where $F_i(x,u_1,\dots,u_N,v_1,\dots,v_M)$ is the stationary limit of $G_i(x,t,u_1,\dots,u_N,v_1,\dots,v_M)$ correspondingly for each $i\in \{1,\dots,N\}$.

Suppose there is an anomaly denoted by $\omega$, which is a subset of the habitat $\Omega$. This anomaly corresponds to a distinct region within $\Omega$, where the environmental conditions differ from the rest of the habitat. Specifically, this region might be either more hostile or more favorable for living. For example, it could have increased sunlight exposure or contain harmful chemicals and pesticides. Our objective is to identify and analyze this anomaly and its impact on the reaction functions $F$ and $G$ in equations \eqref{eq:tp_maingeneral} and \eqref{eq:tp_mainstationary}, respectively. 
 
We assume that $\omega$ is an open bounded $C^2$ domain within $\Omega$, with $\Omega \backslash \omega$ connected, and that the outer boundary $\partial\Omega$ is also of class $C^2$ for $\Omega \subset \mathbb{R}^n$, $n \in \mathbb{N}$. The reaction function $\mathbf{G}$ exhibits a discontinuity across $\partial\omega$. Specifically, $\mathbf{G}$ takes the form
 \begin{equation}\label{jump:G}
     \mathbf{G}(x,t,u_1,\dots,u_N,v_1,\dots,v_M)=\begin{cases}
         \mathbf{G}^1(x,t,u_1,\dots,u_N,v_1,\dots,v_M) \quad & \text{if} \ x \in \omega,\\
         \mathbf{G}^0(x,t,u_1,\dots,u_N,v_1,\dots,v_M) \quad & \text{otherwise},
     \end{cases}
 \end{equation}
with $\mathbf{G}^1,\mathbf{G}^0$ being $C^{\gamma}$ H\"older continuous for some $\gamma \in (0,1)$ in their respective domains, such that
\begin{equation}\label{eq:GSep}
    \mathbf{G}^1(x,t,u_1,\dots,u_N,v_1,\dots,v_M) \neq \mathbf{G}^0(x,t,u_1,\dots,u_N,v_1,\dots,v_M) \quad \text{for} \  x\in \partial\omega. 
\end{equation}

In the stationary case of equation \eqref{eq:tp_mainstationary}, we similarly assume the presence of a topological structure characterized by an anomalous inhomogeneity $\omega \Subset \Omega$. Here, $\omega$ is a bounded Lipschitz domain such that $\Omega \backslash \bar{\omega}$ is connected. We also assume that a similar form holds for the reaction function $\mathbf{F}(x,u_1,\dots,u_N,v_1,\dots,v_M)$, as in the time-dependent case for $G$ in \eqref{eq:GSep}. We will give more precise descriptions of these functions in subsection \ref{sec:tp_nonsm_ad}. 

Associated to these inverse problems, we construct the following measurement maps of a pair of Cauchy data:
\begin{equation}\label{eq:tp_MeasureMap1}
    \mathcal{M}^{+}_{\omega,\mathbf{G}}  := \left((\mathbf{u},\mathbf{v})|_{\partial\Omega}, (\mathbf{u},\mathbf{v})|_{t=T}\right) \to \omega,\mathbf{G},
\end{equation}
where $\nu$ is the exterior unit normal vector to $\partial\Omega$. 
Similarly, we have 
\begin{equation}\label{eq:tp_MeasureMap2}
    \mathcal{M}^{+}_{\omega,\mathbf{F}}  :=  (\partial_{\nu}\mathbf{u},\partial_{\nu}\mathbf{v})|_{\partial\Omega} \text{ fixed} \to \omega,\mathbf{F},
\end{equation}
where $\mathbf{f}=(f_1,\dots,f_N)$, $\mathbf{g}=(g_1,\dots,g_M)$ are the initial functions related to \eqref{eq:tp_mainstationary}.

The `+' sign in \eqref{eq:tp_MeasureMap1} and \eqref{eq:tp_MeasureMap2} indicates that the data are associated with the non-negative solutions of the models \eqref{eq:tp_maingeneral} and \eqref{eq:tp_mainstationary}, respectively. Physically speaking, the symbol $\omega$ in equation \eqref{eq:tp_MeasureMap1} denotes the unidentified smooth domain within $\Omega$, while the $\omega$ referenced in \eqref{eq:tp_MeasureMap2} signifies the region of the irregular polyhedral inclusion present in the space of $\Omega$. Therefore, the inverse problems \eqref{eq:tp_MeasureMap1} and \eqref{eq:tp_MeasureMap2} focus on recovering the location, shape, and parameter configuration of this anomaly. This is also known as the inverse inclusion problem in the theory of inverse problems.

Pertaining to the inverse inclusion problems \eqref{eq:tp_MeasureMap1} and \eqref{eq:tp_MeasureMap2}, we are interested in the following two unique identifiability issues:
\begin{itemize}
    \item Can we uniquely determine $\omega$ and $\mathbf{G}$ using the measurement map $\mathcal{M}^{+}_{\omega,\mathbf{G}}$? Specifically, if two admissible inclusions $(\omega_k,\mathbf{G}_k)$ for $k=1, 2$ produce the same boundary measurement, does it imply that $(\omega_1,\mathbf{G}_1)=(\omega_2,\mathbf{G}_2)$?  In other words, if $\mathcal{M}^+_{\omega_1,\mathbf{G}_1}=\mathcal{M}^+_{\omega_2,\mathbf{G}_2}$ in \eqref{eq:tp_maingeneral}, does it follow that $(\omega_1,\mathbf{G}_1)=(\omega_2,\mathbf{G}_2)$?
     \item Can we uniquely determine $\omega$ and $\mathbf{F}$ using the measurement map $\mathcal{M}^{+}_{\omega,\mathbf{F}}$? Specifically, if two admissible inclusions $(\omega_k,\mathbf{F}_k)$ for $k=1, 2$ produce the same boundary measurement, does it imply that $(\omega_1,\mathbf{F}_1)=(\omega_2,\mathbf{F}_2)$?  In other words, if $\mathcal{M}^+_{\omega_1,\mathbf{F}_1}=\mathcal{M}^+_{\omega_2,\mathbf{F}_2}$ in \eqref{eq:tp_mainstationary}, does it follow that $(\omega_1,\mathbf{F}_1)=(\omega_2,\mathbf{F}_2)$?
\end{itemize}

Formally, our main results can be summarized as follows:
\begin{thm}
    Consider a certain general scenario  in which an unknown smooth heterogeneity $\omega$ is present within a larger domain $\Omega$. In this scenario, the domain $\omega$ and its corresponding parameter configurations $\mathbf{G}$ can be uniquely determined by the boundary measurement $\mathcal{M}^{+}_{\omega,\mathbf{G}}$.
\end{thm}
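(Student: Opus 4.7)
The plan is to argue by contradiction. Suppose two admissible pairs $(\omega_1,\mathbf{G}_1)$ and $(\omega_2,\mathbf{G}_2)$ produce identical measurement data, and show first that $\omega_1=\omega_2$ and then that $\mathbf{G}_1=\mathbf{G}_2$. As a preparatory step I would establish a well-posedness and piecewise-regularity theory for \eqref{eq:tp_maingeneral}, yielding non-negative solutions $(\mathbf{u}_k,\mathbf{v}_k)$, $k=1,2$, that are $C^{2+\gamma,1+\gamma/2}$ separately in each of $\omega_k$ and $\Omega\setminus\overline{\omega_k}$, with matching Dirichlet and conormal traces across $\partial\omega_k$. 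A strong maximum principle, adapted to the cross-diffusion system, simultaneously furnishes strictly positive reference solutions that will be needed for the linearization step below.

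For the inverse argument, fix a common admissible input $(\mathbf{f},\mathbf{g})$. The identity of the measurement maps forces $(\mathbf{u}_1,\mathbf{v}_1)=(\mathbf{u}_2,\mathbf{v}_2)$ on $\Sigma$ and at $t=T$. Set $\mathbf{w}:=(\mathbf{u}_1-\mathbf{u}_2,\mathbf{v}_1-\mathbf{v}_2)$; in the common exterior $E:=\Omega\setminus\overline{\omega_1\cup\omega_2}$ both solutions obey the same PDE with reaction $\mathbf{G}^0$, so $\mathbf{w}$ satisfies a linear coupled parabolic system with trivial lateral Cauchy data and vanishing terminal data. A weak unique continuation argument for parabolic systems, applied after linearizing around $(\mathbf{u}_2,\mathbf{v}_2)$, together with the connectedness of $\Omega\setminus\overline{\omega_k}$, then yields $(\mathbf{u}_1,\mathbf{v}_1)\equiv(\mathbf{u}_2,\mathbf{v}_2)$ throughout $E$. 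Now suppose for contradiction that $\omega_1\neq\omega_2$ and, by symmetry, pick $x_0\in\partial\omega_1\cap(\Omega\setminus\overline{\omega_2})$. Near $x_0$, $(\mathbf{u}_2,\mathbf{v}_2)$ is smooth in a full neighborhood, and $(\mathbf{u}_1,\mathbf{v}_1)$ agrees with it on the exterior side of $\partial\omega_1$. Using the smooth continuation of $(\mathbf{u}_2,\mathbf{v}_2)$ across $\partial\omega_1$ as a comparison solution and invoking Cauchy uniqueness for the quasilinear PDE with reaction $\mathbf{G}_1^1$ inside $\omega_1$, the identity propagates to a one-sided neighborhood of $\partial\omega_1$ inside $\omega_1$. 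Taking the limit $x\to x_0$ of the two equations satisfied by $(\mathbf{u}_1,\mathbf{v}_1)$ from each side of $\partial\omega_1$ and subtracting produces
\begin{equation*}
\mathbf{G}_1^1(x_0,t,\mathbf{u}(x_0,t),\mathbf{v}(x_0,t)) = \mathbf{G}_1^0(x_0,t,\mathbf{u}(x_0,t),\mathbf{v}(x_0,t)) \qquad\text{for all }t\in(0,T],
\end{equation*}
which directly contradicts the jump hypothesis \eqref{eq:GSep}. Hence $\omega_1=\omega_2=:\omega$. With $\omega$ fixed, evaluating \eqref{eq:tp_maingeneral} on each side of $\partial\omega$ while varying $(\mathbf{f},\mathbf{g})$ over a sufficiently rich admissible family scans the $(\mathbf{u},\mathbf{v})$-arguments of $\mathbf{G}$ and recovers $\mathbf{G}^0$ and $\mathbf{G}^1$ pointwise.

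The main obstacle will be the parabolic unique continuation step, because the linearized system is not principally diagonal: the cross-diffusion block $d_i\Delta(u_iv_j)$ entangles the principal symbols of the $u_i$- and $v_j$-equations, and the chemotactic term $\chi^i_j\nabla\cdot(v_j\nabla u_i)$ introduces a first-order coupling whose coefficient degenerates where $v_j=0$. My plan is to linearize around a strictly positive reference solution (supplied by the maximum principle above) and then use the change of unknowns $\tilde u_i := u_i(1+\sum_j\delta_{ij}v_j)$ to render the principal part block-triangular with uniformly parabolic scalar diagonal entries, after which a Carleman estimate for coupled parabolic systems can propagate the zero lateral Cauchy data inward. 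A secondary, more routine, issue — ensuring the admissible inputs $(\mathbf{f},\mathbf{g})$ are rich enough to scan the $(\mathbf{u},\mathbf{v})$-arguments of $\mathbf{G}$ — is handled by a small-time perturbation argument combined with the H\"older continuity hypotheses on $\mathbf{G}^0,\mathbf{G}^1$.
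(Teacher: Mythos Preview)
Your overall architecture --- unique continuation in the common exterior, then a contradiction at a point of $\partial\omega_1\setminus\overline{\omega_2}$ --- matches the paper's, but the paper executes it very differently, and one of your steps does not go through.

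The gap is in the propagation step inside $\omega_1$. You write that, using $(\mathbf{u}_2,\mathbf{v}_2)$ as a comparison solution and ``Cauchy uniqueness for the quasilinear PDE with reaction $\mathbf{G}_1^1$'', the identity $(\mathbf{u}_1,\mathbf{v}_1)=(\mathbf{u}_2,\mathbf{v}_2)$ propagates from the exterior to a one-sided neighbourhood of $x_0$ inside $\omega_1$. But near $x_0\notin\overline{\omega_2}$ the pair $(\mathbf{u}_2,\mathbf{v}_2)$ solves the system with reaction $\mathbf{G}^0$, \emph{not} $\mathbf{G}_1^1$; the two functions satisfy genuinely different PDEs on the interior side of $\partial\omega_1$, so no Cauchy--Kowalevski/UCP comparison is available. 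Equivalently, the difference $\mathbf{w}=(\mathbf{u}_1-\mathbf{u}_2,\mathbf{v}_1-\mathbf{v}_2)$ inside $\omega_1$ satisfies a linear system whose source is $\mathbf{G}_1^1(\mathbf{u}_1,\mathbf{v}_1)-\mathbf{G}^0(\mathbf{u}_2,\mathbf{v}_2)$, and even when $\mathbf{w}=0$ this source is the nontrivial jump $\mathbf{G}_1^1-\mathbf{G}^0$; unique continuation does not force $\mathbf{w}\equiv0$ there. Without this propagation you cannot conclude that $(\mathbf{u}_1,\mathbf{v}_1)$ is $C^2$ across $\partial\omega_1$, and your ``subtract the two one-sided PDEs at $x_0$'' step breaks down because the second-order terms may jump.

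The paper sidesteps both this issue and the hard system-UCP you flag by high-order linearization around the explicit background $(\mathbf{u}_0,\mathbf{0})$. Choosing $\mathbf{v}_0=\mathbf{0}$ and $\mathbf{g}_1=\mathbf{g}_2=\mathbf{0}$ annihilates the cross-diffusion block $d_i\Delta(u_iv_j)$ and the chemotactic coupling $\nabla\cdot(v_j\nabla u_i)$ at each successive order, so the $\ell$-th linearized system for $u_i^{(\ell)}$ is a \emph{scalar} heat equation with a source that is a polynomial in the already-determined lower-order data and the Taylor coefficients $G^{(\ell)}$. Unique continuation is then the classical scalar parabolic result, and the contradiction at $\partial\omega_2\setminus\omega_1$ comes from evaluating this linear equation (whose left-hand side vanishes identically in the exterior, hence up to the interface) rather than from propagating the nonlinear solutions inward. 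The same mechanism recovers the Taylor coefficients of $\mathbf{G}$ order by order, replacing your ``scan the $(\mathbf{u},\mathbf{v})$-arguments'' sketch with a concrete inductive scheme.
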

\begin{thm}
    Consider a certain general scenario where there exists an anomalous smooth or non-smooth polyhedral inclusion $\omega\Subset\Omega$, such that $\mathbf{F}$ exhibits a jump discontinuity due to adherence to certain admissibility properties. In this case, $\omega$ and its parameter configurations $\mathbf{F}$ are uniquely determined by the boundary measurement $\mathcal{M}^{+}_{\omega,\mathbf{F}}$.
\end{thm}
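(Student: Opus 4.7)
The plan is to argue by contradiction from the assumed identity of measurements and exploit the discontinuity hypothesis \eqref{eq:GSep} (transposed to $\mathbf{F}$) at a carefully chosen point of $\partial\omega_1\triangle\partial\omega_2$. Concretely, suppose $\mathcal{M}^{+}_{\omega_1,\mathbf{F}_1}=\mathcal{M}^{+}_{\omega_2,\mathbf{F}_2}$ and let $(\mathbf{u}_k,\mathbf{v}_k)$ denote the associated non-negative solutions of \eqref{eq:tp_mainstationary} for $k=1,2$ arising from the shared Dirichlet data $(\mathbf{f},\mathbf{g})$ and Neumann data $(\partial_\nu\mathbf{u},\partial_\nu\mathbf{v})$. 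First I would form the differences $\mathbf{U}:=\mathbf{u}_1-\mathbf{u}_2$ and $\mathbf{V}:=\mathbf{v}_1-\mathbf{v}_2$, observe that they vanish to first order on $\partial\Omega$, and verify that on the connected open set $G:=\Omega\setminus\overline{\omega_1\cup\omega_2}$ the pair solves a linear coupled elliptic system obtained by subtracting the two copies of \eqref{eq:tp_mainstationary} (both using the exterior datum $\mathbf{F}^0$ there) after linearising around $(\mathbf{u}_1,\mathbf{v}_1)$. Standard unique continuation for second-order elliptic systems, applied component-wise using non-degeneracy of the diffusive principal parts $d_i\Delta$ and $\delta_j\Delta$, then forces $\mathbf{U}\equiv\mathbf{V}\equiv 0$ throughout $G$, so that $\mathbf{u}_1=\mathbf{u}_2$ and $\mathbf{v}_1=\mathbf{v}_2$ in a full neighbourhood of $\partial\Omega$ and, by connectedness, throughout $G$.

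Second, I would prove $\omega_1=\omega_2$. Assume for contradiction that $(\omega_1\setminus\overline{\omega_2})\cup(\omega_2\setminus\overline{\omega_1})\neq\emptyset$ and without loss of generality pick $x_0$ in a relatively open portion of $\partial\omega_1\cap (\Omega\setminus\overline{\omega_2})$. In the smooth sub-case I would use a boundary patch on which $\mathbf{u}_2,\mathbf{v}_2$ satisfy the exterior equations while $\mathbf{u}_1,\mathbf{v}_1$ satisfy the interior ones, match them via the coincidence $\mathbf{u}_1=\mathbf{u}_2$ in $G$, and read off the jump relation
\begin{equation*}
\mathbf{F}^1(x_0,\mathbf{u}_1(x_0),\mathbf{v}_1(x_0))=\mathbf{F}^0(x_0,\mathbf{u}_1(x_0),\mathbf{v}_1(x_0)),
\end{equation*}
which directly contradicts the admissibility assumption (the analogue of \eqref{eq:GSep} for $\mathbf{F}$). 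For the non-smooth polyhedral case, the matching produces only an interior trace on one side, so one must choose $x_0$ as a vertex of the polyhedron $\omega_1$ and perform a corner singularity analysis: expand $\mathbf{u}_2,\mathbf{v}_2$ using harmonic-type special solutions (complex geometric optics or singular harmonic functions concentrated at $x_0$) of the exterior linearised system, insert them into a weak formulation of the jump equation obtained via integration by parts on a small cone at $x_0$, and extract the contribution of the discontinuity. The admissibility hypothesis guarantees a non-vanishing leading term whose cancellation is incompatible with the exponential or polynomial decay of the test function along the opening solid angle, giving the desired contradiction. This is the heart of the argument and the anticipated main obstacle, since the coupled cross-diffusion structure and the gradient-type chemotactic terms $\chi^{i}_{j}\nabla\!\cdot(v_j\nabla u_i)$ force a vectorial version of the corner-scattering machinery rather than the scalar Laplacian case.

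Finally, once $\omega_1=\omega_2=:\omega$ is established, the identity $(\mathbf{u}_1,\mathbf{v}_1)=(\mathbf{u}_2,\mathbf{v}_2)$ in $G$ together with unique continuation across $\partial\omega$ for the interior linearised system (using that both $\mathbf{F}^1_1$ and $\mathbf{F}^1_2$ are H\"older continuous) propagates the equality of solutions into $\omega$. Substituting back into \eqref{eq:tp_mainstationary} and equating right-hand sides pointwise yields
\begin{equation*}
\mathbf{F}^1_1(x,\mathbf{u}_1,\mathbf{v}_1)=\mathbf{F}^1_2(x,\mathbf{u}_1,\mathbf{v}_1)\quad\text{for a.e.\ }x\in\omega,
\end{equation*}
and, provided the admissibility class contains sufficient variation in $(\mathbf{f},\mathbf{g})$ to span the relevant range of $(\mathbf{u},\mathbf{v})$-values, this promotes to $\mathbf{F}_1\equiv\mathbf{F}_2$ as functions of their arguments, completing the identification. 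The delicate point throughout is to guarantee that the unique continuation and the corner-singularity step can be carried out with non-negative solutions (matching the `+' convention in $\mathcal{M}^{+}$), which requires careful selection of boundary data $(\mathbf{f},\mathbf{g})$ within the admissibility class.
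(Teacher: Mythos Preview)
Your proposal has the correct architecture---unique continuation in the exterior, a corner-singularity contradiction for the polyhedral case, and then coefficient recovery---but it omits the central technical device that the paper relies on and that resolves precisely the difficulties you flag as ``the anticipated main obstacle.'' The paper does \emph{not} subtract the two full nonlinear solutions and then attempt unique continuation or corner analysis for the resulting coupled, variable-coefficient, cross-diffusive system. Instead it applies \emph{successive high-order linearization} around the explicit constant solution $(\mathbf{u}_0,\mathbf{0})$ of \eqref{eq:tp_mainstationary}: one expands the boundary data as $\mathbf{f}=\mathbf{u}_0+\varepsilon\mathbf{f}_1+\tfrac12\varepsilon^2\mathbf{f}_2+\cdots$, $\mathbf{g}=\varepsilon\mathbf{g}_1+\cdots$, and differentiates in $\varepsilon$. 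Choosing $\mathbf{g}_1=\mathbf{g}_2=\cdots=\mathbf{0}$ forces $\mathbf{v}^{(I)}=\mathbf{v}^{(II)}=\cdots=\mathbf{0}$, which annihilates every cross-diffusion and chemotaxis term and reduces each linearized $u_i$-equation to a \emph{scalar} Laplace equation. The corner argument (Lemma~\ref{lem:tp_AuxThm}) is then purely scalar, and no ``vectorial corner-scattering machinery'' is needed. Further, by choosing $\mathbf{f}_1=(0,\dots,1,\dots,0)$ at varying positions, one isolates each Taylor coefficient $F^{(\ell)}_{i,u_{h_1}\cdots u_{h_\ell}}$ individually and reads off its value at the corner apex $x_c$ via the auxiliary lemma; induction on $\ell$ then recovers the full analytic function $\mathbf{F}$.

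The concrete gaps in your route are therefore: (i) your Step~1 unique continuation is asserted for a system whose principal part, after subtracting and linearizing around $(\mathbf{u}_1,\mathbf{v}_1)$, still contains the mixed terms $d_i\delta_{ij}\Delta(U_i v_{j,1}+u_{i,1}V_j)$ and $\chi^i_j\nabla\cdot(V_j\nabla u_{i,1}+v_{j,1}\nabla U_i)$, and it is not clear that ``component-wise'' UCP applies; (ii) your polyhedral step anticipates a genuinely hard vector problem that the paper simply sidesteps; (iii) your final step, ``provided the admissibility class contains sufficient variation to span the relevant range of $(\mathbf{u},\mathbf{v})$-values,'' is exactly what the high-order linearization makes rigorous and systematic, and without it the recovery of $\mathbf{F}$ as a function (rather than its value along a single solution) remains heuristic; (iv) non-negativity is handled in the paper by the expansion around a strictly positive constant $\mathbf{u}_0$ for small $\varepsilon$, which your direct approach does not provide.
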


The specifics regarding the anomalous inclusion $\omega$ and the admissible class of the unknowns $\mathbf{G}$ and $\mathbf{F}$ will be detailed in subsection \ref{sec:tp_smooth_ad} and \ref{sec:tp_nonsm_ad}. The main results of this work are contained in Theorems \ref{thm:tp_smooth_mainthm} and \ref{thm:tp_part1_mainreslut}.

\subsection{Technical developments and discussion}\label{sec:tp_Techn}

This work establishes a novel theoretical framework for solving inverse problems in complex  multi-species cross-diffusion predator-prey chemotaxis ecological systems, with a focus on the unique identification of internal habitat anomalies -- unknown subdomains where environmental conditions differ sharply from the surrounding region. Departing from prior studies where ecological parameters are assumed to vary smoothly or remain globally constant \cite{Ding2023inverse, LLL1}, a key contribution of this paper is the detection and characterization of anomalies that manifest as jump discontinuities in the reaction functions $\mathbf{F}$ or $\mathbf{G}$, using only exterior boundary measurements. This capability enables the non-invasive localization of degraded zones, resource hotspots, or physical barriers within a habitat -- an essential task in ecological monitoring, with direct applications ranging from invasive species management to the assessment of environmental contamination.

The mathematical detection of such internal anomalies in ecological inverse problems remains largely unexplored. While related efforts exist in fields such as source reconstruction in bioluminescence tomography \cite{Ding2023smooth} or singular geometry recovery in mean field games \cite{LL2024}, our study addresses the more general and ecologically relevant case of regions with potentially irregular (polyhedral) boundaries, which better approximate real-world habitat features. Furthermore, unlike our earlier works on nonlinear parabolic systems \cite{LLL1, LLL2025inverse}, we relax global regularity assumptions, explicitly allowing $\mathbf{F}$ and $\mathbf{G}$ to exhibit discontinuities across an unknown interface $\partial\omega$. This permits the simultaneous recovery of both the discontinuous coefficient and the geometric shape of the anomaly itself -- a considerably more challenging and ecologically meaningful problem.

Beyond handling discontinuities, our framework also delivers strong identifiability results for smoother configurations, all achieved through boundary measurements. These advances are significant for two principal reasons. First, to the best of our knowledge, this is the first work to address the shape determination of general subdomains within a multi-species chemotaxis-predator-prey model from boundary measurements. Second, we judiciously incorporate a priori ecological information by considering admissible classes of functional responses $\mathbf{G}$ that are sufficiently broad to encompass physically realistic scenarios. For smoother anomalies, we require only that the unknown subdomain possesses at least $C^2$-regular boundaries.

A distinctive feature of our model is its capacity to capture the dynamics of multiple interacting species and chemical signals, providing a more realistic representation of complex ecological networks than traditional two-species models. This multi-species, multi-chemical framework introduces substantial analytical challenges, as couplings between population densities and chemotactic gradients create a highly nonlinear and interdependent system. It is precisely within this intricate setting—where conventional simplification techniques often fail—that we successfully solve the inverse problem of determining habitat anomalies. Our results demonstrate that even under complex ecological interactions, the discontinuous topological structure of a habitat and its associated reaction functions can be uniquely and simultaneously identified from limited boundary data. This achievement underscores the robustness of our theoretical framework and significantly extends the applicability of inverse problem methodologies to realistically complex biological systems.

This advancement is not only mathematically novel but also ecologically critical, grounded in the integrative, cross-scale nature of the model. While forward modeling of chemotaxis-predator-prey dynamics is well-developed \cite{Keller1970, Lotka1925, Volterra1926, Sengupta2011, Gnanasekaran2024, Gao2024, Wang2024, Fu2020, Schaaf1985, Lin1988, Cantrell2020}, the corresponding inverse problem -- recovering internal habitat structures from boundary data -- remains largely open. In contrast to existing inverse studies in population dynamics, which have largely focused on recovering smooth coefficients under global continuity assumptions \cite{Ding2023inverse, LLL1, LLbiology1, LLL2025inverse, LLL2, LLL2025pee}, the present work pioneers a fundamentally distinct direction: the simultaneous geometric recovery of an unknown habitat anomaly $\omega$ and the discontinuous ecological parameters governing it. This represents a critical methodological shift, moving from the question of how species interact to that of where the environment itself changes abruptly. By bridging this gap, our work provides a complete mathematical framework for linking microscale environmental heterogeneity to macroscale ecological sensing via boundary measurements.

The remainder of this paper is organized as follows. In Section \ref{sec:tp_soomth_TP}, we present the main result and detailed proof for determining smooth internal anomalies and their associated coefficient functions. In Section \ref{sec:tp_nonsm_TP}, we address the non-smooth case for the stationary chemotaxis-predator-prey system.

{\centering \section{Smooth Internal Topological Structures}  \label{sec:tp_soomth_TP} }
\subsection{Admissible classes}\label{sec:tp_smooth_ad}
\begin{defi}
    Let $(\mathbf{u}_0,\mathbf{v}_0)$ be a known non-negative constant solution of \eqref{eq:tp_maingeneral}, and $E$ be a compact subset of $\Omega\subset\mathbb{R}^n$.  We say that $U(x,t,p,q): E \times \mathbb{R}\times\mathbb{C}^N  \times \mathbb{C}^M \to \mathbb{C}$ is admissible, denoted by $U \in \mathcal{B}_E,$ if:
    \begin{enumerate}[label=(\roman*)]
\item The map $(p,q) \mapsto U(\cdot,\cdot, p,q)$ is holomorphic with value in $C^{2+\alpha,1+\frac{\alpha}{2}}(E\times\mathbb{R})$ for some $\alpha\in(0,1)$,
\item $U(x,t,\cdot,\cdot)$ is $C^{\sigma_1,\sigma_2}$-continuous with respect to $(x,t)\in E\times\mathbb{R}$ for some $\sigma_1,\sigma_2 \in(0,1)$,
\item $U(x,t,\mathbf{u}_0,\mathbf{0})=0$ for all $x\in \mathbb{R}^n\times\mathbb{R}$, for any $\mathbf{u}_0 \neq 0$,
\item $U^{(0,1)}(x,t,\cdot,\mathbf{v}_0)=U^{(1,0)}(x,t,\mathbf{u}_0,\cdot)=0$ for all $(x,t)\in E\times\mathbb{R}$,
  \end{enumerate}
It is clear that if $U$ satisfies these four conditions, it can be expanded into a power series
\[U(x,t,p,q)=\sum\limits^{\infty}_{m,n=1}U_{mn}(x,t)\frac{p^m q^n}{(m+n)!}.\]
where $U_{mn}(x,t)=\frac{\partial^m}{\partial p^m}\frac{\partial^n}{\partial q^n}U(x,t,\mathbf{u}_0,\mathbf{0})\in C^{2+\alpha,1+\frac{\alpha}{2}}(E\times\mathbb{R})$.
\end{defi}

The admissibility condition is initially imposed on $U$ as a prerequisite. To fulfill this condition, we extend these functions from real variables to the complex plane, resulting in $\tilde{U}$, and assume that they possess holomorphic properties with respect to the complex variables $(p,q)$. Afterwards, $U$ is obtained by restricting $\tilde{U}$ to the real line. Moreover, since the values of $U$ lie in $\mathbb{R}$, we can assume that the series expansion of $\tilde{U}$ is real-valued. Given the well-established physical relevance of the multi-species, multi-chemical predator-prey chemotaxis model, solutions to \eqref{eq:tp_maingeneral} are assured. With this foundation, we now define the admissible class for the coefficient function $\mathbf{G}$ appearing in \eqref{eq:tp_maingeneral}.
\begin{defi}\label{def:tp_G}
   Let $\Omega$ be an open bounded set in $\mathbb{R}^n$, $n\in \mathbb{N}$, such that $\partial\Omega\in C^2$. We say that $G(x,t,\mathbf{u},\mathbf{v})\in L^2(\Omega\times \mathbb{R}\times \mathbb{R}^{N}\times \mathbb{R}^{M})$ is admissible and denoted by $G \in \mathcal{B}$, if:
    \begin{enumerate}[label=(\roman*)]
        \item  $G$ is of the form $G(x,t,\mathbf{u},\mathbf{v}) = G^0(x,t,\mathbf{u},\mathbf{v}) + (G^1(x,t,\mathbf{u},\mathbf{v})-G^0(x,t,\mathbf{u},\mathbf{v}))\chi_\omega, \  x\in\Omega,$ 
    such that $G^1\in\mathcal{B}_\omega$, $G^0\in\mathcal{B}_{\Omega\backslash\omega}$,
        \item  $\omega\Subset\Omega$ is an open bounded subset such that $\partial\omega\in C^2$, and $\Omega\backslash \omega$ is connected,
        \item $G^0(x,t,\mathbf{u},\mathbf{v})\not\equiv G^1(x,t,\mathbf{u},\mathbf{v})$ on $\partial\omega\times (0,T)$. 
    \end{enumerate}
\end{defi}

\begin{remark}
    One notable distinction is that, unlike the constraints imposed on the admissible classes for coefficients in \cite{LLL1}, we do not require their linearized counterparts to be strictly constant. This relaxation is more consistent with the physical background of the problem.
\end{remark}

\subsection{Main theorem}\label{sec:tp_smooth_mainthm}
For the recovery of the smooth space $\omega$, we have $(\mathbf{u}^1,\mathbf{v}^1)$ and $(\mathbf{u}^0,\mathbf{v}^0)$ satisfy:
\begin{equation}\label{eq:tp_local_smooth1}
    \begin{cases}
    \partial_t u^1_1(x,t)-d_1\Delta \left(u^1_1(x,t)+\sum_{j=1}^M\delta_{1j} u^1_1(x,t)v^1_j(x,t)\right)\\ \qquad \qquad \qquad \qquad\qquad \qquad \qquad\qquad=G^1_1(x,t,u^1_1,\dots,u^1_N,v^1_1,\dots,v^1_M)&\ \text{in } \omega\times(0,T) ,\\
    \qquad \qquad\qquad \qquad \qquad \qquad \qquad \vdots \qquad \qquad \qquad \qquad \qquad \qquad \qquad  &\quad \vdots\\   
    \partial_t u^1_N(x,t)-d_N\Delta \left(u^1_N(x,t)+\sum_{j=1}^M\delta_{Nj} u^1_N(x,t)v^1_j(x,t)\right)\\ \qquad \qquad \qquad \qquad\qquad \qquad \qquad\qquad=G^1_N(x,t,u^1_1,\dots,u^1_N,v^1_1,\dots,v^1_M) &\ \text{in } \omega\times (0,T) ,\\
    \partial_t v^1_1(x,t)-\delta_1 \Delta v^1_1(x,t)=\chi^{1}_{1}\nabla \cdot (v^1_1 \nabla u^1_1)+ \cdots + \chi^{N}_{1}\nabla \cdot (v^1_1 \nabla u^1_N)&\ \text{in } \omega\times (0,T) ,\\
    \qquad \qquad \qquad\qquad \qquad \vdots \qquad \qquad \qquad \qquad \qquad \qquad \qquad  &\quad \vdots\\   
     \partial_t v^1_M(x,t)-\delta_M \Delta v^1_M(x,t)=\chi^{1}_{M}\nabla \cdot (v^1_M \nabla u^1_1)+ \cdots + \chi^{N}_{M}\nabla \cdot (v^1_M \nabla u^1_N)&\ \text{in } \omega\times (0,T) ,\\
     u^1_1(x,0)=f_1, \cdots, u^1_N(x,0)=f_N,\  v^1_1(x,0)=g_1, \cdots, v^1_M(x,0)=g_M,  &\  \text{in } \omega,
    \end{cases}
\end{equation}
and
\begin{equation}\label{eq:tp_local_smooth0}
    \begin{cases}
    \partial_t u^0_1(x,t)-d_1\Delta \left(u^0_1(x,t)+\sum_{j=1}^M\delta_{1j} u^0_1(x,t)v^0_j(x,t)\right)\\ \qquad \qquad \qquad \qquad\qquad \qquad \qquad\qquad=G^0_1(x,t,u^0_1,\dots,u^0_N,v^0_1,\dots,v^0_M)&\ \text{in } (\Omega\backslash\omega)\times(0,T) ,\\
    \qquad \qquad \qquad \qquad \qquad \qquad \qquad\vdots  \qquad \qquad \qquad \qquad \qquad \qquad  &\quad \vdots\\   
    \partial_t u^0_N(x,t)-d_N\Delta \left(u^0_N(x,t)+\sum_{j=1}^M\delta_{Nj} u^0_N(x,t)v^0_j(x,t)\right)\\ \qquad \qquad \qquad \qquad\qquad \qquad \qquad\qquad=G^0_N(x,t,u^0_1,\dots,u^0_N,v^0_1,\dots,v^0_M) &\ \text{in } (\Omega\backslash\omega)\times(0,T) ,\\
    \partial_t v^0_1(x,t)-\delta_1 \Delta v^0_1(x,t)=\chi^{1}_{1}\nabla \cdot (v^0_1 \nabla u^0_1)+ \cdots + \chi^{N}_{1}\nabla \cdot (v^0_1 \nabla u^0_N)&\ \text{in } (\Omega\backslash\omega)\times(0,T) ,\\
    \qquad \qquad \qquad \qquad \qquad \vdots \qquad \qquad \qquad \qquad \qquad \qquad \qquad  &\quad \vdots\\   
     \partial_t v^0_M(x,t)-\delta_M \Delta v^0_M(x,t)=\chi^{1}_{M}\nabla \cdot (v^0_M \nabla u^0_1)+ \cdots + \chi^{N}_{M}\nabla \cdot (v^0_M \nabla u^0_N)&\ \text{in } (\Omega\backslash\omega)\times(0,T) ,\\
     u^0_1(x,0)=f_1, \cdots, u^0_N(x,0)=f_N,\  v^0_1(x,0)=g_1, \cdots, v^0_M(x,0)=g_M,  &\  \text{in } \Omega\backslash\omega,
    \end{cases}
\end{equation}
respectively, such that 
\begin{equation}
    \begin{cases}
        \partial_\nu u^0_i(x,t)=\partial_\nu u^1_i(x,t), \quad \partial_\nu v^0_j(x,t) = \partial_\nu v^1_j(x,t) &\quad \text{on }\partial\omega\times (0,T),\\
        u^0_i(x,0) = u^1_i(x,0), \quad v^0_j(x,0)=v^1_j(x,0) & \quad\text{on } \partial\omega\times (0,T),
    \end{cases}
\end{equation}
for $i=1,\dots,N$ and $j=1,\dots,M$.

And we present our main result as follow.
\begin{thm}\label{thm:tp_smooth_mainthm}
    Let $\omega\Subset\Omega$ be such that $\omega,\Omega$ are open bounded sets in $\mathbb{R}^n$ with $C^2$ boundary. Suppose $\mathbf{G}\in\mathcal{B}$ is such that $(\mathbf{u}^1,\mathbf{v}^1)$ and $(\mathbf{u}^0,\mathbf{v}^0)$ are classical solutions to \eqref{eq:tp_local_smooth1} and \eqref{eq:tp_local_smooth0} respectively. Then $\mathbf{G}$ is uniquely determined by the measurement $\mathcal{M}^{+}_{\omega,\mathbf{G}}$, in the sense that the smooth domain $\omega$ and coefficient function $\left.\mathbf{G}(x,t,\mathbf{u},\mathbf{v})\right|_{\partial\omega}$ are uniquely determined.
\end{thm}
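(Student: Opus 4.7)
My plan is to argue by contradiction and split the claim into two stages: (i) uniqueness of the inclusion $\omega$, and (ii) uniqueness of $\mathbf{G}|_{\partial\omega}$ once $\omega$ is fixed. Assume two admissible pairs $(\omega_k,\mathbf{G}_k)$, $k=1,2$, produce identical measurements $\mathcal{M}^{+}_{\omega,\mathbf{G}}$, with corresponding classical solutions $(\mathbf{u}_k,\mathbf{v}_k)$. Since the Dirichlet-type and flux data on $\partial\Omega\times(0,T)$ together with the terminal data at $t=T$ agree, parabolic unique continuation applied to the coupled difference system in the connected component of $(\Omega\setminus\overline{\omega_1\cup\omega_2})\times(0,T)$ meeting $\partial\Omega\times(0,T)$ gives $(\mathbf{u}_1,\mathbf{v}_1)\equiv(\mathbf{u}_2,\mathbf{v}_2)$ there, reducing everything to a local analysis near $\partial\omega_1\cup\partial\omega_2$.

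The core analytic tool is a high-order linearization scheme adapted to the admissible class $\mathcal{B}$. Around the reference baseline $(\mathbf{u}_0,\mathbf{0})$ I would introduce a $K$-parameter family of small boundary and initial perturbations $\epsilon=(\epsilon_1,\ldots,\epsilon_K)$. Using the holomorphy of $\mathbf{G}^0,\mathbf{G}^1$ in $(p,q)$, the solutions $(\mathbf{u}_k(\epsilon),\mathbf{v}_k(\epsilon))$ depend holomorphically on $\epsilon$ with values in $C^{2+\alpha,1+\alpha/2}$. Iterated mixed derivatives $\partial^{m+n}_{\epsilon_1\cdots\epsilon_{m+n}}|_{\epsilon=0}$ generate a hierarchy of linear parabolic systems whose source terms are built from the Taylor coefficients $G^{0,1}_{j;m,n}$ times products of lower-order linearizations. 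Crucially, admissibility conditions (iii)--(iv) annihilate the zeroth- and first-order Taylor coefficients, so nontrivial information first appears at $(m,n)=(1,1)$ (requiring $K=2$), and higher $(m,n)$ is accessed inductively by enlarging $K$. Testing against solutions of the adjoint linearized system yields integral identities of the schematic form
\begin{equation*}
    \int_0^T\!\!\int_{\mathcal{S}} \bigl[(G^1_{1;m,n}-G^0_{1;m,n})-(G^1_{2;m,n}-G^0_{2;m,n})\bigr]\,\Psi\,dS\,dt=0,
\end{equation*}
where $\mathcal{S}$ is supported on the relevant interface and $\Psi$ ranges over admissible products of linearized solutions.

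To establish stage (i), if $\omega_1\neq\omega_2$ pick $x_0\in\partial\omega_2\setminus\overline{\omega_1}$ and a small ball $B(x_0,r)\subset\Omega\setminus\overline{\omega_1}$; the common solution then satisfies the $\mathbf{G}_1^0$-equation throughout $B(x_0,r)\times(0,T)$ but must simultaneously realize the jump of $\mathbf{G}_2$ across $\partial\omega_2\cap B(x_0,r)$. A Runge-type density argument, concentrating test data near $x_0$ in the identity above, forces every Taylor coefficient of $\mathbf{G}_2^1-\mathbf{G}_2^0$ to vanish at $x_0$; by holomorphy in $(p,q)$ this yields $\mathbf{G}_2^1(x_0,\cdot,\cdot,\cdot)\equiv\mathbf{G}_2^0(x_0,\cdot,\cdot,\cdot)$, contradicting \eqref{eq:GSep}. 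Hence $\omega_1=\omega_2=:\omega$, and stage (ii) follows inductively in $(m,n)$ from the same identity on $\partial\omega\times(0,T)$ together with the completeness of admissible products $\{\Psi\}$, reconstructing all Taylor coefficients and then all of $\mathbf{G}|_{\partial\omega}$ by analyticity.

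The main obstacle is analytic rather than strategic: the cross-diffusion terms $d_i\Delta(u_i\sum_j\delta_{ij}v_j)$ and the chemotactic fluxes $\chi^i_j\nabla\cdot(v_j\nabla u_i)$ couple the species at the principal-part level, so the linearization around $(\mathbf{u}_0,\mathbf{0})$ is not a decoupled family of scalar parabolic equations. Establishing well-posedness of each linearized problem in $C^{2+\alpha,1+\alpha/2}$, and in particular proving the density/completeness of the products $\Psi$ needed to isolate each Taylor coefficient $G_{j;m,n}|_{\partial\omega}$ without interference from other species, is where the bulk of the technical effort concentrates. The strict positivity of the diagonal diffusions $d_i,\delta_j$ together with admissibility conditions (iii)--(iv) should make the leading-order operator block-triangular at the baseline and allow the induction on $(m,n)$ to proceed.
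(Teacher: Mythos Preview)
Your overall architecture (contradiction, unique continuation in the exterior, high-order linearization around $(\mathbf{u}_0,\mathbf{0})$, induction on Taylor order) matches the paper's. However, you have misidentified the main difficulty, and this leads you to a much heavier toolkit than is actually needed.

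The ``obstacle'' you flag --- principal-part coupling from the cross-diffusion $d_i\Delta(\delta_{ij}u_iv_j)$ and the chemotaxis flux $\chi^i_j\nabla\cdot(v_j\nabla u_i)$ --- disappears entirely at the chosen baseline. Since $\mathbf{u}_0$ is a constant vector and $\mathbf{v}_0=\mathbf{0}$, the linearized chemotaxis term vanishes (both $v_{j,0}=0$ and $\nabla u_{i,0}=0$), so each $v_j^{(I)}$ satisfies a pure heat equation. The paper then takes $\mathbf{g}_1=\mathbf{g}_2=\mathbf{0}$, forcing $\mathbf{v}^{(I)}=\mathbf{v}^{(II)}=\mathbf{0}$; this in turn kills the cross-diffusion contribution to the $u$-equations, and every $u_i^{(I)}$ and $u_i^{(II)}$ satisfies a scalar heat equation with at most a known source. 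The system is fully decoupled at every order used, and no block-triangular analysis, coupled well-posedness theory, or density-of-products argument is required.

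Consequently, the paper never writes an integral identity, never invokes an adjoint system, and never uses a Runge approximation. Instead it picks the first-order input $\mathbf{f}_1$ so that $\mathbf{u}^{(I)}$ is a \emph{constant} vector such as $(1,0,\ldots,0)$ or $(1,1,0,\ldots,0)$. Unique continuation forces the second-order difference $\tilde u_1^{(II)}$ to vanish on $\partial\omega_2\setminus\omega_1$, and substituting the constant $\mathbf{u}^{(I)}$ directly into the PDE gives a pointwise equality $G^{0(II)}_{1,u_hu_k}=G^{2,1(II)}_{1,u_hu_k}$ on that interface, contradicting the admissibility jump condition. Stage (ii) proceeds identically on $\partial\omega$. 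Your schematic surface integral over $\mathcal{S}$ and the density of products $\{\Psi\}$ are replaced by this direct pointwise evaluation; the completeness question you worry about never arises.

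In short: your plan is not wrong, but you should exploit the freedom to set the $\mathbf{v}$-perturbations to zero and to choose constant $\mathbf{u}^{(I)}$. This collapses the problem to scalar heat equations and turns the identification step into a one-line substitution, rather than an adjoint/Runge argument.
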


\subsection{High-order variation method}\label{sec:tp_highvari}

Before proving Theorem \ref{thm:tp_smooth_mainthm}, we introduce the high-order linearization technique. This method enables the reduction of the nonlinear inverse problem to a sequence of linear problems while simultaneously preserving the non-negativity constraint inherent to the model. 

 Taking $\mathbf{G}\in\mathcal{B}_E$ for some compact subset $E$ of $\Omega$. Let $\mathbf{u}:(u_1,\dots,u_N)$, $\mathbf{v}:=(v_1,\dots,v_M)$. With a set of initial functions $\mathbf{u}_0$ and $\mathbf{v}_0$, along with a sufficiently small positive constant $\varepsilon$, we can express the functions $\mathbf{f}(x;\varepsilon)$ and $\mathbf{g}(x;\varepsilon)$ as follows:
\begin{equation}\notag
    \mathbf{f}(x;\varepsilon)=\mathbf{u}_{0}+\varepsilon \mathbf{f}_{1}(x)+\frac{1}{2}\varepsilon^2 \mathbf{f}_{2}(x)+ \tilde{\mathbf{f}}(x;\varepsilon),
\end{equation}
\begin{equation}\notag
    \mathbf{g}(x;\varepsilon)=\mathbf{v}_{0}+\varepsilon \mathbf{g}_{1}(x)+\frac{1}{2}\varepsilon^2 \mathbf{g}_{2}(x)+ \tilde{\mathbf{g}}(x;\varepsilon),
\end{equation}
where $\mathbf{f}_{1}, \mathbf{f}_{2}, \mathbf{g}_{1}, \mathbf{g}_{2} \in [C^{2+\alpha}(E)]^N$, and $\tilde{\mathbf{f}}(x;\epsilon), \tilde{\mathbf{g}}(x;\epsilon)$ satisfy
\[\frac{1}{|\varepsilon|^3}\norm{\tilde{\mathbf{f}}(x;\epsilon)}_{[C^{2+\alpha}(E)]^N}=\frac{1}{|\varepsilon|^3}\norm{\mathbf{f}(x;\varepsilon)-\mathbf{u}_{0}-\varepsilon \mathbf{f}_{1}(x)-\frac{1}{2}\varepsilon^2 \mathbf{f}_{2}(x)}_{[C^{2+\alpha}(E)]^N}\to0
\]
and 
\[\frac{1}{|\varepsilon|^3}\norm{\tilde{\mathbf{g}}(x;\epsilon)}_{[C^{2+\alpha}(E)]^N}=\frac{1}{|\varepsilon|^3}\norm{\mathbf{g}(x;\varepsilon)-\mathbf{v}_{0}-\varepsilon \mathbf{g}_{1}(x)-\frac{1}{2}\varepsilon^2 \mathbf{g}_{2}(x)}_{[C^{2+\alpha}(E)]^N}\to0
\]
respectively, both convergences are uniform with respect to $\varepsilon$. In the case where $\mathbf{u}_{0}=\mathbf{0}$, we impose the condition $\mathbf{f}_{1} \geq 0$, ensuring that $\mathbf{f}$ remains non-negative as $\varepsilon \to 0$. Similarly, when $\mathbf{v}_{0}=\mathbf{0}$, we require $\mathbf{g}_{1} \geq 0$.

Based on the assumption in Theorem \ref{thm:tp_smooth_mainthm}, there exists a classical solution $(\mathbf{u}(x,t;\varepsilon),\mathbf{v}(x,t;\varepsilon))$ to \eqref{eq:tp_maingeneral} in $E$. Let $S$ represent the solution operator for equation \eqref{eq:tp_maingeneral} concerning $(\mathbf{f},\mathbf{g})$. Then, there exists a bounded linear operator $A$ which maps from $[C^{2+\alpha}(E)]^{2N}$ to $[C^{1+\frac{\alpha}{2},2+\alpha}(E\times[0,T])]^{2N}$. These operators are subject to the following properties:
\begin{equation}\notag
	\lim\limits_{\norm{(\mathbf{f},\mathbf{g})}_{[C^{2+\alpha}(E)]^{2N}}\to0}\frac{\|S(\mathbf{f},\mathbf{g})-S(\mathbf{u}_0,\mathbf{v}_0)- A(\mathbf{f},\mathbf{g})\|_{[C^{1+\frac{\alpha}{2},2+\alpha}(E\times[0,T])]^{2N}}}{\norm{(\mathbf{f},\mathbf{g})}_{[C^{2+\alpha}(E)]^{2N}}}=0.
\end{equation}  

Then for fixed $\mathbf{f}_1$ and $\mathbf{g}_1$, $A\left(\mathbf{f}, \mathbf{g}\right)|_{\varepsilon=0}$  corresponds to the solution map for the first-order variation system:
\begin{equation}\label{eq:tp_smooth_Linear1}
  \begin{cases}
    \partial_{t} u^{(I)}_{i}(x,t)-d_{i}\Delta\Big( u^{(I)}_{i}(x,t) +\sum_{j=1}^M\delta_{ij} u^{(I)}_i(x,t)v_{j,0}(x,t)\\\qquad\qquad\qquad\qquad\qquad\qquad+\sum_{j=1}^M\delta_{ij} u_{i,0}(x,t)v_{j}^{(I)}(x,t)\Big)=0 & \text{ in }  E\times(0,T],\\
    \partial_{t} v^{(I)}_{j}(x,t)-\delta_{j}\Delta v^{(I)}_{j}(x,t)=\sum_{i=1}^N\chi_j^i\nabla \cdot (v_{j,0}\nabla u_i^{(I)}+v_j^{(I)}\nabla u_{i,0}) & \text{ in }  E\times(0,T],\\
    u^{(I)}_i(x,0)=f_{1,i}(x), \  v^{(I)}_j(x,0)=g_{1,j}(x) & \text{ in } E,
  \end{cases}
\end{equation}
where $i=1,2,\dots,N$ and $j=1,2,\dots,M$ for \eqref{eq:tp_maingeneral}. Here, we define 
\begin{equation}
(\mathbf{u}^{(I)}(x,t) ,
\mathbf{v}^{(I)}(x,t)):=A(\mathbf{f},\mathbf{g})|_{\varepsilon=0}.
\end{equation}
For simplicity, we denote them as 
\begin{equation}
    u_i^{(I)}(x,t):=\partial_{\varepsilon}u_i(x,t;\varepsilon) \vert_{\varepsilon=0} ,\  v_j^{(I)}(x,t):=\partial_{\varepsilon}v_j(x,t;\varepsilon) \vert_{\varepsilon=0}.
\end{equation}
In the following discussion, we will use these notations to simplify the presentation, with their specific meanings becoming apparent in the context provided.

In particular, we observe that $(\mathbf{u}_0,\mathbf{0})$ is a solution to \eqref{eq:tp_maingeneral} for non-negative constant $\mathbf{u}_0$. Furthermore, choosing $\mathbf{g}_{1}=\mathbf{0}$, by the uniqueness of solutions to parabolic equations, it must be that $\mathbf{v}^{(I)}=\mathbf{0}$, and \eqref{eq:tp_smooth_Linear1} reduces to 
\begin{equation}\label{eq:tp_smooth_Linear1.2}
  \begin{cases}
    \partial_{t} u^{(I)}_{i}(x,t)-d_{i}\Delta u^{(I)}_{i}(x,t) =0 & \text{ in }  E\times(0,T],\\
    \partial_{t} v^{(I)}_{j}(x,t)-\delta_{j}\Delta v^{(I)}_{j}(x,t)=0 & \text{ in }  E\times(0,T],\\
    u^{(I)}_i(x,0)=f_{1,i}(x), \  v^{(I)}_j(x,0)=0 & \text{ in } E.
  \end{cases}
\end{equation}

Next, for the second-order variation, we define
\begin{equation}\notag
    u_i^{(II)}(x,t):=\partial^{2}_{\varepsilon}u_i (x,t)\vert_{\varepsilon=0},\  v_j^{(II)}(x,t):=\partial^{2}_{\varepsilon}v_j (x,t)\vert_{\varepsilon=0}\quad\text{for }i=1,\dots,N; j=1,\dots,M.
\end{equation}
And we have the second-order variation system as follows:
\begin{equation}\label{eq:tp_smooth_Linear2}
    \begin{cases}
		\partial_t u^{(II)}_{i} -d_{i}\Delta \big[u^{(II)}_{i}+\sum\limits^{M}_{j=1} \delta_{ij} (2u^{(I)}_{i}v^{(I)}_{j}+u_{i,0}v^{(II)}_{j}+u^{(II)}_{i}v_{j,0})\big]\\
\qquad \ =\sum\limits^{N}_{h =1} \sum\limits^{N}_{\substack{k =1, \\  k\neq h}} \left[
      (G^{(II)}_{i,u_{k} u_h}+G^{(II)}_{i,u_h u_{k}}) u^{(I)}_h u^{(I)}_{k}+  G^{(II)}_{i,u_{h} u_{h}} (u^{(I)}_{h})^2 \right]&\qquad  \\
\qquad \ 
      + \sum\limits^{N}_{k =1}\sum\limits^{M}_{h =1} 
      (G^{(II)}_{i,v_{h} u_k}+G^{(II)}_{i,u_k v_{h}}) u^{(I)}_k v^{(I)}_{h}\\
\qquad \ +\sum\limits^{M}_{h =1} \sum\limits^{M}_{\substack{k =1, \\  k\neq h}} \left[
      (G^{(II)}_{i,v_{k} v_h}+G^{(II)}_{i,v_h v_{k}}) v^{(I)}_h v^{(I)}_{k}+  G^{(II)}_{i,v_{h} v_{h}} (v^{(I)}_{h})^2   \right] & \text{ in } E\times(0,T],\\
        \partial_t v^{(II)}_{j} -\delta_j \Delta v^{(II)}_{j}= \sum\limits^{N}_{i=1}\nabla\cdot [2v^{(I)}_j\nabla u^{(I)}_{i}+v^{(II)}_j\nabla u_{i,0}+v_{j,0}\nabla u^{(II)}_{i}] & \text{ in } E\times(0,T],\\
		u^{(II)}_i(x,0)=2f_{2,i}(x), v^{(II)}_j(x,0)=2g_{2,j}(x) & \text{ in } E,\\
    \end{cases}  	
\end{equation}
where $i=1,\dots,N; j=1,\dots,M$. It is worth noting that the initial functions $\mathbf{f}_2,\mathbf{g}_2$ can be arbitrarily specified regardless of the initial value $(\mathbf{u}_0,\mathbf{v}_0)$. This flexibility arises from the fact that the non-negativity of $\mathbf{u}$ and $\mathbf{v}$ is guaranteed by the non-negativity of $\mathbf{f}_1$ and $\mathbf{g}_1$.

Once again, choosing $\mathbf{g}_2=\mathbf{0}$, the second-order variation system reduces to 
\begin{equation}\label{eq:tp_smooth_Linear2.2}
    \begin{cases}
		\partial_t u^{(II)}_{i} -d_{i}\Delta u^{(II)}_{i}=\sum\limits^{N}_{h =1} \sum\limits^{N}_{\substack{k =1, \\  k\neq h}} \left[
      (G^{(II)}_{i,u_{k} u_h}+G^{(II)}_{i,u_h u_{k}}) u^{(I)}_h u^{(I)}_{k}+  G^{(II)}_{i,u_{h} u_{h}} (u^{(I)}_{h})^2 \right] & \text{ in } E\times(0,T],\\
        \partial_t v^{(II)}_{j} -\delta_j \Delta v^{(II)}_{j}= 0 & \text{ in } E\times(0,T],\\
		u^{(II)}_i(x,0)=2f_{2,i}(x), v^{(II)}_j(x,0)=0 & \text{ in } E,\\
    \end{cases}  	
\end{equation}
In this work, we will always assume that the Taylor coefficients of $\mathbf{G}$ are symmetric, i.e. $G^{(II)}_{i,u_{h} u_{k}}=G^{(II)}_{i,u_{k} u_{h}}$ for all $h,k=1,\dots,N$.

It is important to highlight that the non-linear terms in the system \eqref{eq:tp_smooth_Linear2} rely on the first-order linearized system \eqref{eq:tp_smooth_Linear1}. Consequently, to account for higher-order Taylor coefficients of $\mathbf{G}$, we consider the following expression for $\ell\in\mathbb{N}$:
\begin{equation}\notag
    u_i^{(\ell)}(x,t):=\partial_{\varepsilon}^{\ell}u_i(x,t)|_{\varepsilon=0},\  v_j^{(\ell)}(x,t):=\partial_{\varepsilon}^{\ell}v_j(x,t)|_{\varepsilon=0} \quad\text{for }i=1,\dots,N; j=1,\dots,M.
\end{equation}
By following this approach, we obtain a series of parabolic systems that will be utilized again in the case where there is a discontinuity in the higher-order Taylor coefficient $\mathbf{G}^{(\ell)}$.

\subsection{Recovery of the smooth internal domain}\label{sec:tp_smooth_domain} 
We establish the proof for smooth domains to reconstruct the unknown domains in this subsection and coefficient functions over time in the next subsection. The unique recovery results for the inverse problem \eqref{eq:tp_maingeneral} is applicable specifically to $C^2$ domains. 

Suppose, on the contrary, that there exists $\omega_1$ and $\omega_2$ such that $\omega_1\neq\omega_2$. By the regularity of the solutions $(\mathbf{u}^1,\mathbf{v}^1)$ and the associated functions of \eqref{eq:tp_local_smooth1}, and similarly for \eqref{eq:tp_local_smooth0}, we can extend the problems. We denote the extension function of $\mathbf{u}^{1}$ on $\omega_2$ as $\mathbf{u}^{1}\vert_{ext}$, extension of $\mathbf{u}^{2}$ on $\omega_1$ as $\mathbf{u}^{2}\vert_{ext}$,
     and similarly $\mathbf{u}^{0}$ on $\omega_1\cup \omega_2\backslash B_\epsilon$ as $\mathbf{u}^{0}\vert_{ext}$, for a small ball $B_\epsilon\subset\omega_1\cup \omega_2$ of radius $\epsilon$. We then define the functions as follows:
    \begin{equation}\notag
     \tilde{\mathbf{u}}^{1}=\mathbf{u}^{0}+(\mathbf{u}^{1}-\mathbf{u}^{0}\vert_{ext})\chi_{\omega_1},\ \tilde{\mathbf{u}}^{2}=\mathbf{u}^{0}+(\mathbf{u}^{2}\vert_{ext}-\mathbf{u}^{0}\vert_{ext})\chi_{\omega_1};   
    \end{equation}
    and
    \begin{equation}\notag
        \hat{\mathbf{u}}^{1}=\mathbf{u}^{0}+(\mathbf{u}^{1}\vert_{ext}-\mathbf{u}^{0}\vert_{ext})\chi_{\omega_2},\ \hat{\mathbf{u}}^{2}=\mathbf{u}^{0}+(\mathbf{u}^{2}-\mathbf{u}^{0}\vert_{ext})\chi_{\omega_2}.
    \end{equation}

Then the $\omega_i$'s are distinct in the sense that there exists two distinct coefficient functions $\mathbf{G}^1\in\mathcal{B}_{\omega_1}$ and $\mathbf{G}^2\in\mathcal{B}_{\omega_2}$ for the inverse problem \eqref{eq:tp_local_smooth1} (with associated exterior problems \eqref{eq:tp_local_smooth0} and same coefficient function $\mathbf{G}^0$) such that $\mathbf{G}^1 \neq \mathbf{G}^0$ for all $x \in \partial\omega_1 \backslash  \omega_2$ and $\mathbf{G}^2 \neq \mathbf{G}^0$ for all $x \in \partial \omega_2 \backslash \omega_1$. Without loss of generality, we take the second case and prove our theorem in $(\Omega\backslash \omega_2)\times (0,T]$. 
     By controlling the input $\mathbf{f}_1$ for each coefficient function $\mathbf{G}^{(II)}_{u_iu_i}(x,t)$, we can choose appropriate $\tilde{\mathbf{u}}^{k(I)}$ $(k=1,2)$ such that $\tilde{\mathbf{u}}^{k(I)}(x,t)=(0,\dots,1,\dots,0)$ is non-trivial only for the $i$-th species. The strategy for the proof involves confirming that every higher-order equation for $\mathbf{G}$ necessitates  $\mathbf{G}^1=\mathbf{G}^2$.  Since the first-order terms are 0, we commence with the second-order terms.  For instance, we illustrate the case where $i=1$, and the remaining $\mathbf{G}^{(II)}_{u_iu_i}$ $(i=2,\dots, N)$ can be determined utilizing a similar method. Subsequently, all other second-order terms can be deduced by extending the analysis to cases where two components of $\tilde{\mathbf{u}}^{k(I)}$ are non-trivial.

     Before we begin the proof, we recall that $\tilde{u}^{(I)}_i:=\tilde{u}^{1(I)}_i-\tilde{u}^{2(I)}_i$ for each $i=1,\dots, N$. By referring to \eqref{eq:tp_smooth_Linear1.2} and the measurement map $\mathcal{M}^{+}_{\omega,\mathbf{G}}$, it follows that $\tilde{u}^{(I)}_i$ now satisfies the following equations:
\begin{equation}\label{eq:tp_identyforuI}
    \begin{cases}
		\partial_t \tilde{u}^{(I)}_i(x,t) -d_{i}\Delta \tilde{u}^{(I)}_i(x,t)= 0 & \text{ in } Q,\\
       \partial_\nu \tilde{u}^{(I)}_i(x,t)=0 & \text{ on }\Sigma,\\
		\tilde{u}^{(I)}_i(x,0)=0 & \text{ in } \Omega.
    \end{cases}  
\end{equation}
Apparently, the only solution to \eqref{eq:tp_identyforuI} is $\tilde{u}^{(I)}_i(x,t)=0$, given the uniqueness of the solution to the heat equation. Hence, we have $\tilde{u}^{1(I)}_i(x,t)=\tilde{u}^{2(I)}_i(x,t)$.

Suppose that $G^{1(II)}_1$ and $G^{2(II)}_1$ adhere to the problem \eqref{eq:tp_local_smooth1}. We have that $\tilde{u}^{1(II)}_1$ and $\tilde{u}^{2(II)}_1$ satisfy the boundary conditions
    \[\tilde{u}^{1(II)}_1(x,t)=\tilde{u}^{2(II)}_1(x,t),\quad \partial_\nu \tilde{u}^{1(II)}_1(x,t) = \partial_\nu \tilde{u}^{2(II)}_1(x,t) \quad \text{ on } \Sigma,\] 
    and initial value condition
    \[\tilde{u}^{1(II)}_1(x,0)=\tilde{u}^{2(II)}_1(x,0) \quad \text{ in } \Omega\backslash(\omega_1\cup\omega_2).\] 
    
    Let
    \[\tilde{u}^{(II)}_1:=\tilde{u}^{1(II)}_1-\tilde{u}^{2(II)}_1,\]
    then we have
    \[\tilde{u}^{(II)}_1(x,t)|_{\Sigma} = \partial_\nu \tilde{u}^{(II)}_1(x,t)|_{\Sigma} =0 \text{ and } \tilde{u}^{(II)}_1(x,0)=0\text{ in }\Omega\backslash(\omega_1\cup\omega_2).\]
     Furthermore, $\tilde{u}^{(II)}_1(x,t)$ solves 
\[\partial_t \tilde{u}^{(II)}_1-d_1 \Delta \tilde{u}^{(II)}_1= 0 \quad \text{ in } (\Omega\backslash(\overline{\omega_1}\cup\overline{\omega_2}))\times(0,T].\]
    It is important to note that $\Omega\backslash(\omega_1\cup\omega_2)$ is a connected set. By the unique continuation principle for parabolic equations \cite{saut1987unique}, we have that $\tilde{u}^{(II)}_1(x,t)=0$ in $(\Omega\backslash(\omega_1\cup\omega_2))\times(0,T]$. Since $\tilde{u}_1^{(II)}\in H^2_{loc}(\Omega)$, by the $C^2$ regularity of the boundaries $\partial \omega_1$ and $\partial\omega_2$, it must hold that the trace of $\tilde{u}_1^{(II)}$ is 0 at the boundary $\partial(\Omega\backslash(\omega_1\cup\omega_2))\times(0,T]$.
    
   Meanwhile, $\tilde{u}^{(II)}_1(x,t)$ satisfies \eqref{eq:tp_smooth_Linear2.2}. Now consider $\partial \omega_2 \backslash \omega_1$. First, we recall the definition of $G$, whose second-order linearized form satisfies
    \begin{equation}\label{eq:tp_GExt}G^{1(II)}_{1,u_hu_k}=G^{0(II)}_{1,u_hu_k} + \left(G^{1,1(II)}_{1,u_hu_k}-\left.G^{0(II)}_{1,u_hu_k}\right\vert_{ext}\right)\chi_{\omega_1},\quad G^{2(II)}_{1,u_hu_k}=G^{0(II)}_{1,u_hu_k} + \left(G^{2,1(II)}_{1,u_hu_k}-\left.G^{0(II)}_{1,u_hu_k}\right\vert_{ext}\right)\chi_{\omega_2},\end{equation}
    for $h,k=1,\dots,N$. 
   Since $\tilde{u}^{(II)}_1=0$ on $\partial \omega_2 \backslash \omega_1$ for all $t\in(0,T]$, $\tilde{u}^{(II)}_1$ satisfies the following:
    \begin{equation}\label{eq:tp_SmoothPf1}
        0=\partial_t \tilde{u}^{(II)}_1-d_1 \Delta \tilde{u}^{(II)}_1 = G^{0(II)}_{1,u_1u_1} (\tilde{u}^{1(I)}_{1})^2 -  G^{2,1(II)}_{1,u_1u_1} (\tilde{u}^{2(I)}_{1})^2 \quad
    \text{ on } (\partial \omega_2 \backslash \omega_1)\times(0,T].
    \end{equation}
    
Substituting our initial choices of $\tilde{\mathbf{u}}^{k(I)}$, \eqref{eq:tp_SmoothPf1} simply implies that
\begin{equation}\notag
     G^{0(II)}_{1,u_{1} u_1}(x,t)=G^{2,1(II)}_{1,u_{1} u_1}(x,t)\text{ on } (\partial \omega_2 \backslash \omega_1)\times(0,T].
 \end{equation}

Yet, comparing this with the definition of $G^{2(II)}_{1,u_{1} u_1}(x,t)$ in \eqref{eq:tp_GExt} and Definition \ref{def:tp_G}, we arrive at a contradiction.
Therefore, it must be concluded that $\partial \omega_2 \backslash \omega_1=\emptyset$ and 
 \[\omega_1 = \omega_2.\]
 
\subsection{Recovery of the coefficient functions}\label{sec:tp_smooth_coe}
Denote $\omega:=\omega_1 = \omega_2$. Assume that $\mathbf{G}^1\neq\mathbf{G}^2$, and in particular, assume that $G^{1(II)}_{1,u_1u_1}\neq G^{2(II)}_{1,u_1u_1}$. Then, through the application of the unique continuation principle as mentioned above, we derive the following outcome:
 \begin{equation}\notag
         0=\partial_t \tilde{u}^{(II)}_1-d_1 \Delta \tilde{u}^{(II)}_1 = G^{1,1(II)}_{1,u_1u_1} (\tilde{u}^{1(I)}_{1})^2 -  G^{2,1(II)}_{1,u_1u_1} (\tilde{u}^{2(I)}_{1})^2  \text{ on } \partial \omega\times(0,T].
    \end{equation}
By substituting the settings for $\tilde{\mathbf{u}}^{k(I)}$ $ (k=1,2)$, we once again deduce that 
\[G^{1,1(II)}_{1,u_1u_1}(x,t)=G^{2,1(II)}_{1,u_1u_1}(x,t)\text{ on } \partial \omega\times(0,T].\]
 Extending the consideration beyond the domain $\omega$, we can therefore infer that
\[G^{1(II)}_{1,u_1u_1}(x,t)=G^{2(II)}_{1,u_1u_1}(x,t) \text{ in } (\overline{\Omega\backslash\omega})\times(0,T].\]

To determine the remaining second-order coefficients for $\mathbf{G}^{(II)}$, we choose the input $\mathbf{f}^{k}_1$ in a manner such that only two components in $\mathbf{\tilde{u}}^{k(I)}$ non-trivial, with the remainder set to zero. For instance, we can consider the scenario where $\mathbf{\tilde{u}}^{k(I)}=(1,1,0,\cdots,0)$ to illustrate this method. This time, $\tilde{u}^{(II)}_1$ satisfies:
\begin{equation}\notag
    \begin{split}
        0=\partial_t \tilde{u}^{(II)}_1-d_1 \Delta \tilde{u}^{(II)}_1 = 2G^{1,1(II)}_{1,u_1u_2} \tilde{u}^{1(I)}_{1}\tilde{u}^{1(I)}_{2}-  2G^{2,1(II)}_{1,u_1u_2}\tilde{u}^{2(I)}_{1}\tilde{u}^{2(I)}_{2} \quad
    \text{ on } \partial \omega\times(0,T].
    \end{split}
    \end{equation}
This indicates
\[G^{1,1(II)}_{1,u_1u_2}(x,t)=G^{2,1(II)}_{1,u_1u_2}(x,t)\text{ on } \partial \omega\times(0,T].\]
 Extending the consideration beyond the domain $\omega$, we can therefore infer that
\[G^{1(II)}_{1,u_1u_2}(x,t)=G^{2(II)}_{1,u_1u_2}(x,t) \text{ in } (\overline{\Omega\backslash\omega})\times(0,T].\]

By repeating the aforementioned process, we obtain the unique identifiability of every second-order coefficient function of $\mathbf{G}_{i}$ for $i=1,\cdots,N$, leading us to the following conclusion:
\[\mathbf{G}^{1(II)}(x,t)=\mathbf{G}^{2(II)}(x,t)\text{ in } (\overline{\Omega\backslash\omega})\times(0,T].\]

Now that all the second-order coefficient functions of $\mathbf{G}$ are known, we commence the recovery process for the third-order coefficient functions of $\mathbf{G}$. To facilitate this process, we recall $\tilde{u}^{(II)}_i:=\tilde{u}^{1(II)}_i-\tilde{u}^{2(II)}_i$ for each $i=1,\dots, N$. By referring to \eqref{eq:tp_smooth_Linear2} and the measurement map $\mathcal{M}^{+}_{\omega,\mathbf{G}}$, it follows that $\tilde{u}^{(II)}_i$ now satisfies the following equations:
\begin{equation}\label{eq:tp_Fsecond_known}
    \begin{cases}
		\partial_t \tilde{u}^{(II)}_i(x,t) -d_{i}\Delta \tilde{u}^{(II)}_i(x,t)= 0 & \text{ in } Q,\\
        \partial_\nu \tilde{u}^{(II)}_i(x,t)=0 & \text{ on }\Sigma,\\
		\tilde{u}^{(II)}_i(x,0)=0 & \text{ in } \Omega.
    \end{cases}  
\end{equation}
Apparently, the sole solution to \eqref{eq:tp_Fsecond_known} is $\tilde{u}^{(II)}_i(x,t)=0$, given the uniqueness of the solution to the heat equation. And we use $u^{(II)}_i$ to denote $u^{1(II)}_i$ and $u^{2(II)}_i$.

Then the system for the third-order variation of $u^{j(III)}_i (j=1,2)$ is once again simply a parabolic equation with the other terms involving only the lower terms $u^{(I)}_i, u^{(II)}_i$. Changing the inputs $\mathbf{f}_{1},\mathbf{f}_{2}$, we can chose $\mathbf{u}^{k(I)},\mathbf{u}^{k(II)}$ such that only the required components are non-trivial, in order to obtain the unique identifiability result for each third-order coefficient function of $\mathbf{G}_{i}$ for $i=1,\cdots,N$. This leads us to the following conclusion:
\[\mathbf{G}^{1(III)}(x,t)=\mathbf{G}^{2(III)}(x,t)\text{ in } (\overline{\Omega\backslash\omega})\times(0,T].\]

Via mathematical induction, we can extend the derived result to higher orders $\ell (\ell \geq 4)$. Consequently, we establish the unique identifiability of both the smooth domain $\omega$ and the coefficient function $\mathbf{G}$.  $ \hfill{\square}$

{\centering \section{Non-smooth Internal Topological Anomalies for the Stationary Predator-Prey Chemotaxis Model}  \label{sec:tp_nonsm_TP} }
Next, we investigates internal anomalies characterized by non-smooth boundaries. We first delineate the geometric configuration of the anomalies, then introduce the necessary admissible classes of parameters or domains, and state our main theoretical findings. The corresponding detailed proofs are deferred to subsections \ref{sec:tp_nonsm_domain} and \ref{sec:tp_nonsm_coe}.

\subsection{Geometrical setup}\label{sec:tp_nonsm_geo}
Assume that $\mathcal{K}_{x_c;e_1,\dots,e_l}$ is a polyhedral corner in $\Omega$ with the apex $x_c$ and edges $e_j$ $(j=1,\dots, l;l \geq n)$, where $e_j$ $(j=1,\dots, l)$ are mutually linearly independent vectors in $\Omega $. In this section, we require $\mathcal{K}_{x_c;e_1,\dots,e_l}$ to be strictly convex so that it can be enclosed within a strictly convex conic cone $\mathcal{S}_{x_c,\theta_c}$ characterized by an opening angle $\theta_c \in (0,\pi / 2)$. The conic cone $\mathcal{S}_{x_c,\theta_c}$ is defined as follows:
\[\mathcal{S}_{x_c,\theta_c}:=\{y\in \Omega:0\leq\angle (y-x_c,v_c)\leq\theta_c,\theta_c\in(0,\pi/2)\} ,\]
where $x_c$ is its apex and $v_c$ is the axis.

Given a constant $h \in \mathbb{R}_{+}$, we define the truncated polyhedral corner by
\[\mathcal{K}_h:=\mathcal{K}_{x_0;e_1,\dots,e_l}\cap B_h,\] where $B_h:=B_h(x_c)$ is an open ball contained in $\Omega$ centred at $x_c$ with radius $h>0$. Observe that both $\mathcal{K}_{x_0;e_1,\dots,e_l}$ and $\mathcal{K}_h$ are Lipschitz domains.

Let $\Gamma^{\pm}_{h}$ be two edges of $\mathcal{K}_h$, there are
\begin{equation}\label{definitely:tp_Gamma+}
\begin{split}
    \Gamma^{+}_{h}=\{x\in \Omega \vert x=r(& \cos \theta_{M_1}, \sin \theta_{M_1}\cos \theta_{M_2},\dots,\sin \theta_{M_1} \dots \sin \theta_{M_{n-2}}\cos \theta_{M_{n-1}},\\
    & \sin \theta_{M_1} \dots \sin \theta_{M_{n-2}} \sin \theta_{M_{n-1}} ) \},
\end{split}
\end{equation}
and
\begin{equation}\label{definitely:tp_Gamma-}
\begin{split}
    \Gamma^{-}_{h}=\{x\in \Omega \vert x=r(& \cos \theta_{m_1}, \sin \theta_{m_1}\cos \theta_{m_2},\dots,\sin \theta_{m_1} \dots \sin \theta_{m_{n-2}}\cos \theta_{m_{n-1}},\\
    & \sin \theta_{m_1} \dots \sin \theta_{m_{n-2}}\sin \theta_{m_{n-1}} ) \},
\end{split}
\end{equation}
 where $r \in [0,h]$, $\theta_{M_i}, \theta_{m_i} \in [0,2\pi )$ for $ i=1,\dots, n-1$ such that $\max\limits_{i} (\theta_{M_i}- \theta_{m_i}) \leq \pi$ and $\sum\limits^{n-1}_{i=1}(\theta_{M_i}- \theta_{m_i}) \leq 2\pi$. 

We begin by considering some asymptotics around this polyhedral corner for a CGO solution that we intend to use.
\begin{lem}\label{lem:tp_wCGOlem}
Let $w$ be the solution to 
\begin{equation}\label{eq:tp_CGOEq}
- \Delta w(x) = 0\text{ in }\Omega,
\end{equation}
of the form 
\begin{equation}\label{eq:tp_CGO}
w = e^{\tau (\xi + i\xi^\perp)\cdot (x-x_c)}
\end{equation} 
such that $\xi\cdot\xi^\perp=0$, $\xi,\xi^\perp\in\mathbb{S}^{n-1}$. 
Then there exists a positive number $\rho$ depending on $\mathcal{K}_h$ satisfying 
\begin{equation}\label{eq:tp_CGOCond}
-1 < \xi\cdot\widehat{(x-x_c)}\leq -\rho < 0\quad\text{ for all } x\in \mathcal{K}_h,
\end{equation}
where $\hat{x} = \frac{x}{|x|}$. Moreover, for sufficiently large $\tau$, 
\begin{equation}\label{eq:tp_CGOEst1}
\left|\int_{\mathcal{K}_h}w\right|\geq C_{\mathcal{K}_h}\tau^{-n} +\mathcal{O}\left(\frac{1}{\tau}e^{-\frac{1}{2}\rho h \tau}\right),
\end{equation}
\begin{equation}\label{eq:tp_CGOEst2}
\left|\int_{\mathcal{K}_h}|x-x_c|^\alpha w\right|\lesssim \tau^{-(\alpha+n)}+ \frac{1}{\tau}e^{-\frac{1}{2}\rho h \tau} \quad\forall \alpha>0,
\end{equation}
\begin{equation}\label{eq:tp_CGOEst3}
\norm{w}_{H^1(\partial\mathcal{K}_h)}\lesssim (2\tau^2+1)^{\frac12}e^{-\rho h\tau},
\end{equation}
\begin{equation}\label{eq:tp_CGOEst4}
\norm{\partial_\nu w}_{L^2(\partial\mathcal{K}_h)}\lesssim \tau e^{-\rho h \tau}.
\end{equation}
Here, we use the symbol `` $\lesssim$" to denote that the inequality holds up to a constant which is independent of $\tau$.
\end{lem}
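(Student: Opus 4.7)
The plan is to verify the geometric angle condition, estimate the interior integrals via a scale-and-truncate argument against the infinite polyhedral cone, and obtain the boundary estimates from pointwise bounds on the CGO amplitude. That $w$ solves $-\Delta w=0$ follows from $(\xi+i\xi^\perp)\cdot(\xi+i\xi^\perp)=|\xi|^2-|\xi^\perp|^2+2i\,\xi\cdot\xi^\perp=0$. For the angle condition, since $\mathcal{K}_h\subset\mathcal{S}_{x_c,\theta_c}$ with opening $\theta_c<\pi/2$, the unit vector $\widehat{x-x_c}$ stays within angle $\theta_c$ of the axis $v_c$ for every $x\in\mathcal{K}_h\setminus\{x_c\}$; choosing $\xi=-v_c$ yields $\xi\cdot\widehat{x-x_c}\leq-\cos\theta_c=:-\rho<0$, and any $\xi^\perp\in\mathbb{S}^{n-1}$ orthogonal to $\xi$ then completes the CGO, giving \eqref{eq:tp_CGOCond}.

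Next, the pointwise bound $|w(x)|\leq e^{\tau\xi\cdot(x-x_c)}\leq e^{-\rho\tau|x-x_c|}$ makes the integrals over the infinite polyhedral cone $\mathcal{K}:=\mathcal{K}_{x_c;e_1,\dots,e_l}$ absolutely convergent. I would split
\[\int_{\mathcal{K}_h}|x-x_c|^\alpha w\,dx = \int_{\mathcal{K}}|x-x_c|^\alpha w\,dx - \int_{\mathcal{K}\setminus\mathcal{K}_h}|x-x_c|^\alpha w\,dx,\]
rescale $y=\tau(x-x_c)$ in the first term to obtain $\tau^{-(\alpha+n)}\int_{\mathcal{K}-x_c}|y|^\alpha e^{(\xi+i\xi^\perp)\cdot y}\,dy$, and bound the second term by $C\int_h^\infty r^{\alpha+n-1}e^{-\rho\tau r}\,dr=\mathcal{O}(\tau^{-1}e^{-\rho h\tau})$, which is absorbed into the stated remainder $\tau^{-1}e^{-\rho h\tau/2}$. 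For $\alpha=0$ the inner cone integral is the classical Laplace transform of the characteristic function of a simplicial cone with unit edges $a_1,\dots,a_n$, namely
\[\int_{\mathcal{K}-x_c} e^{(\xi+i\xi^\perp)\cdot y}\,dy = \frac{(n-1)!\,|\det(a_1,\dots,a_n)|}{\prod_{j=1}^n [-(\xi+i\xi^\perp)\cdot a_j]},\]
whose modulus yields the constant $C_{\mathcal{K}_h}$ in \eqref{eq:tp_CGOEst1}; for $l>n$ one triangulates $\mathcal{K}$ into simplicial subcones. Together this establishes \eqref{eq:tp_CGOEst1} and \eqref{eq:tp_CGOEst2}.

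For the boundary estimates, I would use $\nabla w=\tau(\xi+i\xi^\perp)w$, which gives $|\nabla w|=\sqrt{2}\tau|w|$. On the relevant outer portion of $\partial\mathcal{K}_h$ (in particular the spherical cap where $|x-x_c|=h$) one has $|w|\leq e^{-\rho h\tau}$, so integrating the pointwise bounds over $\partial\mathcal{K}_h$, whose $(n-1)$-dimensional measure depends only on $\mathcal{K}_h$, yields $\|w\|_{L^2(\partial\mathcal{K}_h)}^2\lesssim e^{-2\rho h\tau}$ and $\|\partial_\nu w\|_{L^2(\partial\mathcal{K}_h)}^2\leq\|\nabla w\|_{L^2(\partial\mathcal{K}_h)}^2\lesssim 2\tau^2 e^{-2\rho h\tau}$; these combine into \eqref{eq:tp_CGOEst3} and \eqref{eq:tp_CGOEst4}.

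The main obstacle is securing the \emph{lower} bound in \eqref{eq:tp_CGOEst1}, which demands that the normalized cone integral $I(\mathcal{K},\xi,\xi^\perp):=\int_{\mathcal{K}-x_c}e^{(\xi+i\xi^\perp)\cdot y}\,dy$ be bounded away from zero rather than merely finite. The explicit rational formula above settles the simplicial case, but for a non-simplicial polyhedral corner one must triangulate and verify that contributions from the subcones do not cancel. The robust fix is a genericity argument: the zero set of $I$ as a function of $\xi^\perp$ is a proper real-analytic subvariety of the unit sphere in $\xi^{\perp}$, so a generic perturbation of $\xi^\perp$ subject to $\xi\cdot\xi^\perp=0$ avoids it and yields the constant $C_{\mathcal{K}_h}>0$.
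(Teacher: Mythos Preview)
Your argument is correct in outline but follows a genuinely different route from the paper. The paper works entirely in spherical coordinates: it parametrises $\mathcal{K}_h$ by a radial variable $r\in[0,h]$ and angular variables ranging over a coordinate box $\prod_i[\theta_{m_i},\theta_{M_i}]$, invokes the Laplace--Gamma identity $\int_0^h r^{n-1}e^{-\mu r}\,dr=\Gamma(n)\mu^{-n}-\int_h^\infty r^{n-1}e^{-\mu r}\,dr$ to split off the leading term, and then applies an integral mean value theorem in the angular variables to produce the constant $C_{\mathcal{K}_h}$ in the lower bound \eqref{eq:tp_CGOEst1}. Your rescaling $y=\tau(x-x_c)$ on the full infinite cone is cleaner for the upper bounds and makes the $\tau^{-(\alpha+n)}$ scaling transparent without any special-function identities; the price is that the lower bound then hinges on the non-vanishing of the fixed number $I(\mathcal{K},\xi,\xi^\perp)=\int_{\mathcal{K}-x_c}e^{(\xi+i\xi^\perp)\cdot y}\,dy$, which you handle via the explicit simplicial formula plus triangulation and a genericity argument in $\xi^\perp$. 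One small correction: the simplicial Laplace transform is $|\det(a_1,\dots,a_n)|\big/\prod_j[-(\xi+i\xi^\perp)\cdot a_j]$, without the $(n-1)!$ prefactor; the $\Gamma(n)$ in the paper's computation arises from the radial Jacobian $r^{n-1}$ in spherical coordinates, not from the cone geometry. The paper's mean-value step is more direct but glosses over the fact that the integrand is complex-valued; your genericity argument is more robust but leaves $C_{\mathcal{K}_h}$ non-explicit and requires checking that $I$ is not identically zero on the sphere of admissible $\xi^\perp$. The boundary estimates \eqref{eq:tp_CGOEst3}--\eqref{eq:tp_CGOEst4} are handled the same way in both: pointwise bounds $|w|\le e^{-\rho h\tau}$ on the spherical cap $\mathcal{K}\cap\partial B_h$ combined with $|\nabla w|=\sqrt{2}\,\tau|w|$.
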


\begin{proof}
    Firstly, we observe that due to the orthogonality of $\xi$ and $\xi^\perp$, the function $w$ satisfies the equation $ \Delta w = 0$ in $\mathbb{R}^n$, particularly within $\Omega$.

    Moreover, it is straightforward to fulfill condition \eqref{eq:tp_CGOCond} by selecting a suitable $\xi$ because of the properties of $\mathcal{K}_h$.

    Next, we recall from Lemma 2.2 of \cite{DiaoFeiLiuWang-2022-Semilinear-Shape-Corners} that, for some fixed $\alpha>0$ and $0<\delta<e$,
    \begin{equation}\label{eq:tp_LaplaceTransformIdentity}
        \int_0^\delta r^\alpha e^{-\mu r}\,dr = \frac{\Gamma(\alpha+1)}{\mu^{\alpha+1}} + \int_\delta^\infty r^\alpha e^{-\mu r}\,dr,
    \end{equation} 
    where $\mu\in\mathbb{C}$ and $\Gamma$ is the Gamma function. Furthermore, if the real part of $\mu$, represented as $\mathscr{R}\mu$, satisfies $\mathscr{R}\mu \geq \frac{2\alpha}{e}$, then we have the inequality $r^\alpha \leq e^{\mathscr{R}\mu r/2}$, which implies
    \begin{equation}\label{eq:tp_LaplaceTransformIneq}
        \left|\int_\delta^\infty r^\alpha e^{-\mu r}\,dr\right| \leq \frac{2}{\mathscr{R}\mu}e^{-\mathscr{R}\mu\delta/2}.
    \end{equation}
   With this, we express $x - x_c = (x_1, \dots, x_n) \in \mathbb{R}^n$ in terms of polar coordinates:
    \[x-x_c= \left( \begin{array}{l}
    r\cos\theta_1 \\
    r\sin\theta_1\cos\theta_2 \\
    r\sin\theta_1\sin\theta_2\cos\theta_3 \\
    \vdots \\
    r\sin\theta_1\cdots\sin\theta_{n-2}\cos\theta_{n-1} \\
    r\sin\theta_1\cdots\sin\theta_{n-2}\sin\theta_{n-1}
    \end{array} \right)^T,\]
    where $\theta_i\in[0,\pi]$ for $i=1,\dots,n-1$, with Jacobian
    \[r^{n-1}\sin^{n-2}(\theta_1)\sin^{n-3}(\theta_2)\cdots\sin(\theta_{n-2})\,dr\,d\theta_1 \,d\theta_2 \cdots\,d\theta_{n-2}\,d\theta_{n-1}.\]

    Next, employing the polar coordinate transformation and using \eqref{eq:tp_LaplaceTransformIdentity}, we derive the following expression:
    \begin{multline}\label{eq:tp_CGOEstProofEq}
    \int_{\mathcal{K}_h}e^{\tau (\xi + i\xi^\perp)\cdot (x-x_c)} \\ = I_1 + \int^{\theta_{M_{n-1}}}_{\theta_{m_{n-1}}} \left(\int_{\theta_{m_1}}^{\theta_{M_1}} \cdots \int_{\theta_{m_{n-2}}}^{\theta_{M_{n-2}}} I_2 \sin^{n-2}(\theta_1)\sin^{n-3}(\theta_2)\cdots\sin(\theta_{n-2})\,d\theta_1\,d\theta_2\cdots\,d\theta_{n-2} \right) \,d\theta_{n-1}, 
    \end{multline}
    where 
    \begin{multline}
        \label{eq:tp_CGOEstProofEqI1}
    I_1 := \int^{\theta_{M_{n-1}}}_{\theta_{m_{n-1}}} \int_{\theta_{m_1}}^{\theta_{M_1}} \cdots \int_{\theta_{m_{n-2}}}^{\theta_{M_{n-2}}} \frac{\Gamma(n)}{\tau^n \left((\xi + i\xi^\perp) \cdot \widehat{(x-x_c)}\right)^n} \sin^{n-2}(\theta_1)\cdots\sin(\theta_{n-2})\\\,d\theta_1\cdots\,d\theta_{n-2}\,d\theta_{n-1}\end{multline} 
    and 
    \begin{equation}\label{eq:tp_CGOEstProofEqI2}I_2 := \int_h^\infty r^{n-1} e^{\tau r (\xi + i\xi^\perp)\cdot \widehat{(x-x_c)}} \,dr.\end{equation}
    Applying the integral mean value theorem, we have that 
    \begin{align}\label{eq:tp_CGOEstProofEq1}
        I_1 & = \frac{\Gamma(n)}{\tau^n} \int^{\theta_{M_{n-1}}}_{\theta_{m_{n-1}}} \frac{1}{\left((\xi + i\xi^\perp) \cdot \widehat{(x-x_c)}(\varphi,\theta_\zeta)\right)^n} \, d\theta_{n-1} \\&\qquad\qquad\qquad\times\int_{\theta_{m_1}}^{\theta_{M_1}} \sin^{n-2}(\theta_1) \,d\theta_1 \cdots \int_{\theta_{m_{n-2}}}^{\theta_{M_{n-2}}}\sin(\theta_{n-2})\,d\theta_{n-2} \nonumber \\
        & = \frac{(\theta_{M_{n-1}}-\theta_{m_{n-1}})\Gamma(n)C_{\theta_c}}{\tau^n} \frac{1}{\left((\xi + i\xi^\perp) \cdot \widehat{(x-x_c)}(\varphi_\zeta,\theta_\zeta)\right)^n} 
    \end{align}
    where $C_{\theta_c}$ is a constant satisfying $0 < C_{\theta_c} < 1$, and $\theta_\zeta \in (\theta_{m_{n-1}},\theta_{M_{n-1}})$. Given that $\xi \cdot \widehat{(x - x_c)} > -1$ as per \eqref{eq:tp_CGOCond}, we can infer that
    \begin{equation}\label{eq:tp_CGOEstProofEq2}
    |I_1| \geq \frac{(\theta_{M_{n-1}}-\theta_{m_{n-1}})\Gamma(n)C_{\theta_c}}{\tau^n} \frac{1}{2^{n/2}}. 
    \end{equation}

    On the other hand, for sufficiently large $\tau$, by \eqref{eq:tp_LaplaceTransformIneq},
    \begin{equation}|I_2| = \left| \int_h^\infty r^{n-1} e^{r \tau (\xi + i\xi^\perp)\cdot \widehat{(x-x_c)}} \,dr \right| \leq \frac{2}{\rho\tau} e^{-\frac{1}{2}\rho h \tau}.\end{equation} 
    
    Consequently, we have that 
    \begin{multline}
    \left| \int^{\theta_{M_{n-1}}}_{\theta_{m_{n-1}}} \left(\int_{\theta_{m_1}}^{\theta_{M_1}} \cdots \int_{\theta_{m_{n-2}}}^{\theta_{M_{n-2}}} I_2 \sin^{n-2}(\theta_1)\sin^{n-3}(\theta_2)\cdots\sin(\theta_{n-2})\,d\theta_1\,d\theta_2\cdots\,d\theta_{n-2} \right) \,d\theta_{n-1} \right| 
    \\
    \leq \int^{\theta_{M_{n-1}}}_{\theta_{m_{n-1}}} \left(\int_{\theta_{m_1}}^{\theta_{M_1}} \cdots \int_{\theta_{m_{n-2}}}^{\theta_{M_{n-2}}} |I_2| \,d\theta_1\,d\theta_2\cdots\,d\theta_{n-2} \right) \,d\theta_{n-1}  \leq \frac{2\prod\limits^{n-1}_{i=1}(\theta_{M_i}-\theta_{m_i})}{\rho \tau } e^{-\frac{1}{2}\rho h \tau}.
    \end{multline}
    By combining this result with \eqref{eq:tp_CGOEstProofEq2}, we can deduce the validity of \eqref{eq:tp_CGOEst1} with $C_{\mathcal{K}_h} = \frac{(\theta_{M_{n-1}}-\theta_{m_{n-1}}) C_{\theta_c}}{2^{n/2-1}}$.

    Moving on to \eqref{eq:tp_CGOEst2}, by utilizing \eqref{eq:tp_LaplaceTransformIdentity}, we can derive higher exponents for $r$ in \eqref{eq:tp_CGOEstProofEq}, namely
    \begin{multline*}
    \int_{\mathcal{K}_h}(x-x_c)^\alpha e^{\tau (\xi + i\xi^\perp)\cdot (x-x_c)} \\ = I_{3} + \int_{\theta_{m_{n-1}}}^{\theta_{M_{n-1}}} \left(\int_{\theta_{m_1}}^{\theta_{M_1}} \cdots \int_{\theta_{m_{n-2}}}^{\theta_{M_{n-2}}} I_{4} \sin^{n-2}(\theta_1)\sin^{n-3}(\theta_2)\cdots\sin(\theta_{n-2})\,d\theta_1\,d\theta_2\cdots\,d\theta_{n-2} \right) \,d\theta_{n-1}, 
    \end{multline*}
    where 
    \begin{align*}& |I_{3}| \\
    & := \Bigg|\int_{\theta_{m_{n-1}}}^{\theta_{M_{n-1}}} \int_{\theta_{m_1}}^{\theta_{M_1}} \cdots \int_{\theta_{m_{n-2}}}^{\theta_{M_{n-2}}} \frac{\Gamma(n+\alpha)}{\tau^{n+\alpha} \left((\xi + i\xi^\perp) \cdot \widehat{(x-x_c)}\right)^{n+\alpha}} \\&\qquad\qquad\qquad\qquad\times\sin^{n-2}(\theta_1)\cdots\sin(\theta_{n-2})\,d\theta_1 \cdots\,d\theta_{n-2}\,d\theta_{n-1} \Bigg|\\
    & = \Bigg| \frac{(\theta_{M_{n-1}}-\theta_{m_{n-1}}) \Gamma(n+\alpha)C_{\theta_c}}{\tau^{n+\alpha}} \frac{1}{\left((\xi + i\xi^\perp) \cdot \widehat{(x-x_c)}(\varphi_\zeta,\theta_\zeta)\right)^{n+\alpha}} \Bigg| \\
    & \lesssim\frac{1}{\tau^{n+\alpha}} 
    \end{align*} 
    as in the procedure to obtain \eqref{eq:tp_CGOEstProofEq1}--\eqref{eq:tp_CGOEstProofEq2}, 
    and 
    \begin{equation*}|I_{4}| := \left| \int_h^\infty r^{n+\alpha-1} e^{r \tau (\xi + i\xi^\perp)\cdot \widehat{(x-x_c)}} \,dr \right| \leq \frac{2}{\rho\tau} e^{-\frac{1}{2}\rho h \tau},\end{equation*} 
    which directly leads to \eqref{eq:tp_CGOEst2}.

    Simultaneously, we have the following inequality follows from the Cauchy-Schwarz inequality, considering that $\xi\in\mathbb{S}^{n-1}$:
    \begin{equation}\norm{w}_{H^1(\partial\mathcal{K}_h)} = \left( \norm{w}_{L^2(\partial\mathcal{K}_h)}^2 + \norm{\tau (\xi + i\xi^\perp)w}_{L^2(\partial\mathcal{K}_h)}^2 \right)^{\frac{1}{2}} \leq (2\tau^2+1)^{\frac{1}{2}} \norm{w}_{L^2(\partial\mathcal{K}_h)}. \end{equation} 
   
    Moreover, when applying the polar coordinate transformation to $w$, we find that
    \begin{equation}\label{eq:tp_CGOEstProofEq3}
    \begin{split}
        \norm{w}_{L^2(\partial\mathcal{K}_h)} &= \left( \int_{\theta_{m_{n-1}}}^{\theta_{M_{n-1}}} \int_{\theta_{m_1}}^{\theta_{M_1}} \cdots \int_{\theta_{m_{n-2}}}^{\theta_{M_{n-2}}} e^{2\tau h \xi \cdot \widehat{(x-x_c)}} \,d\theta_1 \cdots \, d\theta_{n-2} \, d\theta_{n-1} \right)^{\frac{1}{2}}\\
        & \leq [ \prod\limits^{n-1}_{i=1}(\theta_{M_i}-\theta_{m_i})]^{\frac{1}{2}} e^{-\rho h\tau}
    \end{split}
    \end{equation} 
    by \eqref{eq:tp_CGOCond}. This gives \eqref{eq:tp_CGOEst3}.

    Finally, 
    \begin{equation}\norm{\partial_\nu w}_{L^2(\partial\mathcal{K}_h)} \leq \norm{\nabla w}_{L^2(\partial\mathcal{K}_h)} = \norm{\tau (\xi + i\xi^\perp)w}_{L^2(\partial\mathcal{K}_h)} \leq \sqrt{2}\tau \norm{w}_{L^2(\partial\mathcal{K}_h)} \end{equation} 
    once again by the Cauchy-Schwarz inequality.Using \eqref{eq:tp_CGOEstProofEq3} allows us to obtain the desired outcome \eqref{eq:tp_CGOEst4}.
\end{proof}

We extend the generalization of Lemma 2.1 from \cite{DiaoFeiLiuWang-2022-Semilinear-Shape-Corners} to dimensions $n \geq 2$. Our extension encompasses general corners centered at $x_c$, which may not necessarily be located at the origin. Moreover, our approach differs from Lemma 2.2 in \cite{LL2024}, in which a convex conic cones was considered rather than polyhedral cones.

\subsection{Admissible classes}\label{sec:tp_nonsm_ad}
\begin{defi}\label{def:tp_time_independent}
    Let $(\mathbf{u}_0,\mathbf{v}_0)$ be a known non-negative constant solution of \eqref{eq:tp_mainstationary}, and $E$ be a compact subset of $\Omega\subset\mathbb{R}^n$.  We say that $U(x,p,q): E \times \mathbb{C}^N  \times \mathbb{C}^M \to \mathbb{C}$ is admissible, denoted by $U \in \mathcal{A}_E,$ if:
   \begin{enumerate}[label=(\roman*)]
\item The map $(p,q) \mapsto U(\cdot, p,q)$ is holomorphic with value in $C^{2+\alpha}(E)$ for some $\alpha\in(0,1)$,
\item $U(x,\cdot,\cdot)$ is $C^{\sigma_1,\sigma_2}$-continuous with respect to $x\in E$ for some $\sigma_1,\sigma_2 \in(0,1)$,
\item  $U(x,\mathbf{u}_0,\mathbf{0})=0$ for all $x\in \mathbb{R}^n$, for any $\mathbf{u}_0 \neq 0$,
\item  $U^{(0,1)}(x,\cdot,\mathbf{v}_0)=U^{(1,0)}(x,\mathbf{u}_0,\cdot)=0$ for all $x \in E,$ 
\end{enumerate}
It is clear that if $U$ satisfies these four conditions, it can be expanded into a power series
\[U(x,p,q)=\sum\limits^{\infty}_{m,n=1}U_{mn}(x)\frac{p^m q^n}{(m+n)!}.\]
where $U_{mn}(x)=\frac{\partial^m}{\partial p^m}\frac{\partial^n}{\partial q^n}U(x,\mathbf{u}_0,\mathbf{0})\in C^{2+\alpha}(E)$.
\end{defi}

Once again, the admissibility condition is initially imposed on $U$ as a prerequisite. To fulfill this condition, we extend these functions from real variables to the complex plane, resulting in $\tilde{U}$, and assume that they possess holomorphic properties with respect to the complex variables $(p,q)$. Afterwards, $U$ is obtained by restricting $\tilde{U}$ to the real line. Moreover, since the values of $U$ lie in $\mathbb{R}$, we can assume that the series expansion of $\tilde{U}$ is real-valued.

Consider a bounded Lipschitz polyhedral domain $\omega\Subset\Omega$ satisfying the condition that $\Omega\backslash\bar{\omega}$ is connected, and each corner of $\omega$ is a convex cone $\mathcal{S}_{x_c,\theta_c}$, as defined in Subsection \ref{sec:tp_nonsm_geo}. Due to the strong physical significance of the multi-species-multi-chemical predator-prey chemotaxis model, solutions exist for \eqref{eq:tp_mainstationary}. Consequently, we can determine the admissibility class for the coefficient function $\mathbf{F}$.
\begin{defi}\label{def:tp_F}
    We say that $F(x,u_1,\dots,u_N,v_1,\dots,v_M):\Omega\times\mathbb{R}^{N+M}\to\mathbb{R}$ is admissible, denoted by $F\in\mathcal{A}$, if 
    $F$ is of the form 
    \[F(x,\mathbf{u},\mathbf{v}) = F^0(x,\mathbf{u},\mathbf{v}) + (F^1(x,\mathbf{u},\mathbf{v})-F^0(x,\mathbf{u},\mathbf{v}))\chi_\omega, \quad x\in\Omega,\]
    such that $F^1\in\mathcal{A}_\omega$, $F^0\in\mathcal{A}_{\Omega\backslash\omega}$, and for some $\ell\in\mathbb{N}$, the $\ell$-th Taylor coefficient of $F^1$ and $F^0$, denoted by $F^{1(\ell)}(x,\mathbf{u},\mathbf{v})$ and $F^{0(\ell)}(x,\mathbf{u},\mathbf{v})$ respectively, are $C^{\sigma_1}$ and $C^{\sigma_0}$ H\"older-continuous for some $\sigma_1, \sigma_0 \in(0,1)$ with respect to $x\in \omega$ and $x\in \Omega\backslash \omega$ respectively, and in an open neighbourhood $U$ of $\partial \omega$, $F^{1(\ell)}(U\cap \omega) \neq F^{0(\ell)}(U\cap(\Omega\backslash \omega))$.
\end{defi}

It is worth noting that the admissibility condition we consider differs from those in previous works on biology models, such as \cite{LLL1,LLL2,LLbiology1,Ding2023inverse}, where the authors assume that the coefficient function $F$ is analytic in the entire real space $\mathbb{R}^n$. In our case, we adopt a more general approach, allowing for singularities at the boundary $\partial E$ of compact subsets $E$ of $\Omega$.

\subsection{Main theorem}\label{sec:tp_nonsm_mainthm}
Having established these definitions, we now present the principal results of this subsection. Specifically, we demonstrate that, microlocally within a corner $\mathcal{K}_h$ of the domain $\omega$, the solution pairs $(\mathbf{u}^1,\mathbf{v}^1)$ and $(\mathbf{u}^0,\mathbf{v}^0)$ in the habitat adopt the forms as:
\begin{equation}\label{eq:tp_micro1}
    \begin{cases}    
    -d_1\Delta\left(u^1_1(x)+\sum_{j=1}^M\delta_{1j} u^1_1(x)v^1_j(x)\right)=F^1_1(x,u^1_1,\dots,u^1_N,v^1_1,\dots,v^1_M) &\quad \text{in } \mathcal{K}_h ,\\
    \qquad \qquad \qquad  \qquad \qquad \qquad \qquad \qquad \vdots &\quad \vdots\\   
    -d_N\Delta \left(u^1_N(x)+\sum_{j=1}^M\delta_{Nj} u^1_N(x)v^1_j(x)\right)=F_N(x,u^1_1,\dots,u^1_N,v^1_1,\dots,v^1_M) &\quad \text{in } \mathcal{K}_h ,\\ 
    -\delta_1 \Delta v^1_1(x)=\chi^{1}_{1}\nabla \cdot (v^1_1 \nabla u^1_1)+ \cdots + \chi^{N}_{1}\nabla \cdot (v^1_1 \nabla u^1_N)&\quad \text{in } \mathcal{K}_h ,\\
    \qquad \qquad \qquad \vdots \qquad \qquad \qquad \qquad \qquad \qquad  &\quad \vdots\\   
     -\delta_M \Delta v^1_M(x)=\chi^{1}_{M}\nabla \cdot (v^1_M \nabla u^1_1)+ \cdots + \chi^{N}_{M}\nabla \cdot (v^1_M \nabla u^1_N)&\quad \text{in } \mathcal{K}_h ,\\
     u^1_1(x)=f_1, \cdots, u^1_N(x)=f_N, \ v^1_1(x)=g_1, \cdots, v^1_M(x)=g_M,  &\quad \text{on } \partial \Omega,
    \end{cases}
\end{equation}
and
\begin{equation}\label{eq:tp_micro2}
    \begin{cases}    
    -d_1\Delta\left(u^0_1(x)+\sum_{j=1}^M\delta_{1j} u^0_1(x)v^1_j(x)\right)=F^0_1(x,u^0_1,\dots,u^0_N,v^0_1,\dots,v^0_M) &\quad \text{in } \mathcal{B}_h\backslash \mathcal{K}_h ,\\
    \qquad \qquad \qquad  \qquad \qquad \qquad \qquad \qquad \vdots &\quad \vdots\\   
    -d_N\Delta \left(u^0_N(x)+\sum_{j=1}^M\delta_{Nj} u^0_N(x)v^0_j(x)\right)=F^0_N(x,u^0_1,\dots,u^0_N,v^0_1,\dots,v^0_M) &\quad \text{in } \mathcal{B}_h\backslash \mathcal{K}_h ,\\ 
    -\delta_1 \Delta v^0_1(x)=\chi^{1}_{1}\nabla \cdot (v^0_1 \nabla u^0_1)+ \cdots + \chi^{N}_{1}\nabla \cdot (v^0_1 \nabla u^0_N)&\quad \text{in } \mathcal{B}_h\backslash \mathcal{K}_h ,\\
    \qquad \qquad \qquad \vdots \qquad \qquad \qquad \qquad \qquad \qquad  &\quad \vdots\\   
     -\delta_M \Delta v^0_M(x)=\chi^{1}_{M}\nabla \cdot (v^0_M \nabla u^0_1)+ \cdots + \chi^{N}_{M}\nabla \cdot (v^0_M \nabla u^0_N)&\quad \text{in } \mathcal{B}_h\backslash \mathcal{K}_h ,\\
     u^0_1(x)=f_1, \cdots, u^0_N(x)=f_N, \ v^0_1(x)=g_1, \cdots, v^0_M(x)=g_M,  &\quad \text{on } \partial \Omega,
    \end{cases}
\end{equation}
respectively, such that 
\begin{equation}
        u^0_i(x) = u^1_i(x), \quad v^0_j(x)=v^1_j(x) \quad\text{on } \partial\mathcal{K}_h\backslash\partial B_h,
\end{equation}
for $i=1,\dots,N$ and $j=1,\dots,M$.
 
We next state our main results.

\begin{thm}\label{thm:tp_part1_mainreslut}
    Suppose that $\mathbf{F}\in\mathcal{A}$ and $\omega\Subset\Omega$ is a convex polyhedron. Assume that $(\mathbf{u}^1,\mathbf{v}^1)$ and $(\mathbf{u}^0,\mathbf{v}^0)$ are classical solutions to \eqref{eq:tp_micro1} and \eqref{eq:tp_micro2} respectively. Then $\omega$ and $\mathbf{F}$ are uniquely determined by the boundary measurement $\mathcal{M}^{+}_{\omega,\mathbf{F}}$. 
\end{thm}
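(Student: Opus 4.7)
\textbf{Proof plan for Theorem \ref{thm:tp_part1_mainreslut}.} The strategy combines the high-order linearization scheme employed in Section \ref{sec:tp_soomth_TP} with the corner-singularity / CGO technology encoded in Lemma \ref{lem:tp_wCGOlem}. I would argue by contradiction: suppose two admissible pairs $(\omega_1,\mathbf{F}_1)$ and $(\omega_2,\mathbf{F}_2)$ yield the same $\mathcal{M}^{+}_{\omega,\mathbf{F}}$ but $\omega_1\neq\omega_2$. Since $\Omega\setminus\overline{\omega_k}$ is connected, one can locate a vertex $x_c\in\partial\omega_1$ of the convex polyhedron $\omega_1$ that does not belong to $\overline{\omega_2}$ (or the reverse). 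Choose $h>0$ small enough that the truncated corner $\mathcal{K}_h=\mathcal{K}_{x_c;e_1,\dots,e_l}\cap B_h(x_c)\subset\omega_1\cap(\Omega\setminus\overline{\omega_2})$. On $\mathcal{K}_h$ the interior solution obeys \eqref{eq:tp_micro1} with $\mathbf{F}^1$, while the extended exterior solution $(\mathbf{u}^{0}|_{\mathrm{ext}},\mathbf{v}^{0}|_{\mathrm{ext}})$ satisfies \eqref{eq:tp_micro2} with $\mathbf{F}^0$.

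Next I would linearize the stationary system around a constant state $(\mathbf{u}_0,\mathbf{0})$ exactly as in Subsection \ref{sec:tp_highvari} but in the time-independent setting, producing a hierarchy of elliptic systems for $\mathbf{u}^{(I)},\mathbf{v}^{(I)},\mathbf{u}^{(II)},\ldots$ Choosing $\mathbf{g}_1=\mathbf{0}$ kills the $\mathbf{v}$ block at first order and reduces the $\mathbf{u}$ block to $-d_i\Delta u_i^{(I)}=0$, which, via the Dirichlet data prescribed on $\partial\Omega$ and unique continuation for the Laplacian in the connected set $\Omega\setminus(\overline{\omega_1}\cup\overline{\omega_2})$, forces $u_i^{1,(I)}=u_i^{2,(I)}$ throughout $\Omega\setminus(\overline{\omega_1}\cup\overline{\omega_2})$ and, by Cauchy data matching on $\partial\mathcal{K}_h\setminus\partial B_h$, gives usable harmonic test functions inside $\mathcal{K}_h$. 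At the order where the discontinuity $\mathbf{F}^{1(\ell)}\neq\mathbf{F}^{0(\ell)}$ first appears (Definition \ref{def:tp_F}), subtracting the two linearized equations and applying Green's identity against the CGO function $w=e^{\tau(\xi+i\xi^{\perp})\cdot(x-x_c)}$ of Lemma \ref{lem:tp_wCGOlem} gives an integral identity of the form
\begin{equation}\notag
\int_{\mathcal{K}_h}\bigl(\mathbf{F}_1^{1(\ell)}-\mathbf{F}_2^{0(\ell)}\bigr)(x,\mathbf{u}_0,\mathbf{0})\,\Pi_{j}(u_j^{(I)})^{m_j}\,w\,dx = R_\tau,
\end{equation}
where $R_\tau$ collects boundary contributions on $\partial\mathcal{K}_h\cap\partial B_h$ plus lower-order Hölder remainders.

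The leading-order analysis of this identity is the crux. By the $C^{\sigma}$ Hölder continuity assumed in Definition \ref{def:tp_F}, I would freeze the jump at $x_c$ and split
\begin{equation}\notag
\bigl(\mathbf{F}_1^{1(\ell)}-\mathbf{F}_2^{0(\ell)}\bigr)(x) = \bigl(\mathbf{F}_1^{1(\ell)}-\mathbf{F}_2^{0(\ell)}\bigr)(x_c) + \mathcal{O}(|x-x_c|^{\sigma}),
\end{equation}
and do likewise for the factor $\Pi_j(u_j^{(I)})^{m_j}$ after normalizing $u_j^{(I)}(x_c)\neq 0$ by a suitable choice of boundary input $\mathbf{f}_1$. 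Estimate \eqref{eq:tp_CGOEst1} then yields a principal term of order $\tau^{-n}$, while \eqref{eq:tp_CGOEst2} controls the Hölder corrections by $\tau^{-(n+\sigma)}$ and \eqref{eq:tp_CGOEst3}--\eqref{eq:tp_CGOEst4} make the boundary piece $R_\tau$ exponentially small. Multiplying through by $\tau^n$ and sending $\tau\to\infty$ forces $(\mathbf{F}_1^{1(\ell)}-\mathbf{F}_2^{0(\ell)})(x_c,\mathbf{u}_0,\mathbf{0})=0$, contradicting the non-coincidence hypothesis in Definition \ref{def:tp_F}. Hence $\omega_1=\omega_2=:\omega$. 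With $\omega$ fixed, the same CGO argument applied at any corner $x_c$ of $\partial\omega$, combined with varying $\mathbf{f}_1,\mathbf{g}_1$ to isolate different monomials $u_j^{m_j}v_k^{n_k}$ and induction on $\ell$ as in Subsection \ref{sec:tp_smooth_coe}, recovers every Taylor coefficient of $\mathbf{F}^1$ on $\partial\omega$ and, via analyticity in $(p,q)$, determines $\mathbf{F}$ itself.

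The principal technical obstacle is verifying that the nonlinear cross-diffusion and taxis couplings (the $\Delta(u_i v_j)$ and $\nabla\cdot(v_j\nabla u_i)$ terms) do not corrupt the leading-order CGO asymptotics: in the first-order linearization these reduce to linear combinations of $\Delta u_i^{(I)}$ and $\Delta v_j^{(I)}$ with constant coefficients evaluated at $(\mathbf{u}_0,\mathbf{0})$, so the relevant elliptic operator has constant principal symbol and harmonic CGO solutions remain admissible test functions; but at higher orders one must carefully separate the contribution that genuinely comes from the jump of $\mathbf{F}^{(\ell)}$ from cross-terms that mix previously-recovered coefficients with new gradient interactions, and then argue inductively that those cross-terms agree in the two configurations once all lower-order Taylor data have already been matched. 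Handling this bookkeeping cleanly, together with ensuring that an appropriate choice of $\mathbf{f}_1$ makes each desired $u_j^{(I)}(x_c)$ nonzero while keeping the solution non-negative, is where the bulk of the work lies.
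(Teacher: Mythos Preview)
Your proposal is correct and follows essentially the same route as the paper: contradiction on $\omega_1\neq\omega_2$, localization to a polyhedral corner $\mathcal{K}_h$ of one inclusion lying outside the other, high-order linearization around $(\mathbf{u}_0,\mathbf{0})$ with $\mathbf{g}_1=\mathbf{0}$ to decouple the $\mathbf{v}$-block, and then a CGO/corner-singularity argument at the first order $\ell$ where the jump in $\mathbf{F}^{(\ell)}$ occurs, followed by induction to recover all Taylor coefficients. The only stylistic difference is that the paper packages your Green's-identity-plus-CGO step into a stand-alone auxiliary lemma (Lemma~\ref{lem:tp_AuxThm}, whose proof is deferred to \cite{LL2024}) stating that matching Cauchy data on $\partial\mathcal{K}_h\setminus\partial B_h$ for $-D\Delta w_m+P+q_m h=0$ with $q_1\neq q_0$ forces $h(x_c)=0$, and then invokes that lemma as a black box with $h=(u^{(I)}_j)^2$ or $h=u^{(I)}_j u^{(I)}_k$, whereas you unpack the CGO asymptotics \eqref{eq:tp_CGOEst1}--\eqref{eq:tp_CGOEst4} directly; the content is the same.
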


Drawing from this theorem, we derive two corollaries. When each corner within the polyhedral domain $\omega$ takes the form of a convex conic cone $\mathcal{S}\subset\Omega$, characterized by
\[\mathcal{S}:=\{y\in\Omega:0\leq\angle (y-x_c,v_c)\leq\theta_c,\theta_c\in(0,\pi/2)\},\] 
where $x_c$ represents the apex, $v_c$ signifies the axis, and the opening angle is $2\theta_c\in(0,\pi)$. By defining a new admissible class $\mathcal{C}$ akin to Definition \ref{def:tp_F}, Corollary \ref{co:tp_change_area} affirms the following:

\begin{cor}\label{co:tp_change_area}
    Suppose that $\mathbf{F}\in\mathcal{C}$  and $\omega\Subset\Omega$ is a convex domain with conic corners. Assume that $(\mathbf{u}^1,\mathbf{v}^1)$ and $(\mathbf{u}^0,\mathbf{v}^0)$ are classical solutions to \eqref{eq:tp_micro1} and \eqref{eq:tp_micro2} respectively. Then $\omega$ and $\mathbf{F}$ are uniquely determined by the boundary measurement $\mathcal{M}^{+}_{\omega,\mathbf{F}}$. 
\end{cor}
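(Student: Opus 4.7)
The plan is to mimic the proof of Theorem~\ref{thm:tp_part1_mainreslut}, replacing the polyhedral truncated corner $\mathcal{K}_h$ with a truncated convex conic cone $\mathcal{S}_h := \mathcal{S}_{x_c,\theta_c}\cap B_h(x_c)$. Suppose for contradiction that two admissible pairs $(\omega_1,\mathbf{F}_1)$ and $(\omega_2,\mathbf{F}_2)$ in $\mathcal{C}$ yield the same measurement $\mathcal{M}^{+}_{\omega,\mathbf{F}}$ but $\omega_1\neq\omega_2$. After extending the interior and exterior solutions as in Section~\ref{sec:tp_smooth_domain} and invoking unique continuation for the stationary coupled system on the connected set $\Omega\setminus\overline{\omega_1\cup\omega_2}$, one may (possibly after swapping indices) select a conic apex $x_c\in\partial\omega_1\setminus\overline{\omega_2}$ and a small $h>0$ so that $\mathcal{S}_h\subset\omega_1\cap(\Omega\setminus\overline{\omega_2})$. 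It then suffices to show that the jump $\mathbf{F}^{1(\ell)}(x_c)=\mathbf{F}^{0(\ell)}(x_c)$ at the lowest nonvanishing Taylor order $\ell$, contradicting the admissibility requirement of class $\mathcal{C}$.

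First, I would extend Lemma~\ref{lem:tp_wCGOlem} to the conic corner case: the harmonic CGO $w=e^{\tau(\xi+i\xi^\perp)\cdot(x-x_c)}$ with $\xi\cdot\xi^\perp=0$ satisfies estimates analogous to \eqref{eq:tp_CGOEst1}--\eqref{eq:tp_CGOEst4} on $\mathcal{S}_h$, with constants depending on $\theta_c$. The proof carries over verbatim because the original argument already enclosed $\mathcal{K}_h$ inside a convex conic cone, employing the Laplace-type identity \eqref{eq:tp_LaplaceTransformIdentity} and the exponential-decay bound \eqref{eq:tp_LaplaceTransformIneq}; working directly on $\mathcal{S}_h$ only simplifies the angular integration and preserves the leading $\tau^{-n}$ concentration. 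The choice $\xi\cdot\widehat{(x-x_c)}\leq -\rho<0$ throughout $\mathcal{S}_h$ is realizable precisely because $\theta_c<\pi/2$.

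Next, I would apply the stationary analogue of the high-order linearization scheme of Section~\ref{sec:tp_highvari} to reduce the nonlinear problem to a cascade of linear equations. Choosing $\mathbf{g}_1=\mathbf{0}$ forces $\mathbf{v}^{(I)}\equiv\mathbf{0}$ by unique solvability, which decouples the first-order variation so that $-d_i\Delta u^{(I)}_i=0$, and by activating only one or two components of the Dirichlet data $\mathbf{f}_1$ one isolates a single targeted Taylor coefficient. Testing the difference equation for $u_i^{(\ell),1}-u_i^{(\ell),2}$ against the CGO $w$ and applying Green's identity on $\mathcal{S}_h$, the matching Cauchy data on $\partial\mathcal{S}_h\setminus\partial B_h$ (which follow from the recursive unique-continuation step) leave the integral identity
\begin{equation*}
    \int_{\mathcal{S}_h}\bigl(F^{1(\ell)}_i(x,\mathbf{u}_0,\mathbf{0})-F^{0(\ell)}_i(x,\mathbf{u}_0,\mathbf{0})\bigr)\,\prod_{k}\bigl(u_k^{(I)}\bigr)^{m_k}\,w\,dx \;=\; \mathcal{R}(\tau),
\end{equation*}
where $\mathcal{R}(\tau)$ collects boundary contributions on $\partial B_h\cap\overline{\mathcal{S}_h}$ that decay as $\tau e^{-\rho h\tau}$ by the extended versions of \eqref{eq:tp_CGOEst3}--\eqref{eq:tp_CGOEst4}. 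Taylor-expanding the H\"older-continuous jump $[F^{(\ell)}_i]$ about $x_c$ and using \eqref{eq:tp_CGOEst1}--\eqref{eq:tp_CGOEst2}, the left-hand side dominates as $|[F^{(\ell)}_i](x_c)|\,C_{\mathcal{S}_h}\tau^{-n}$ modulo $O(\tau^{-n-\sigma})$. Letting $\tau\to\infty$ forces $[F^{(\ell)}_i](x_c)=0$, yielding the contradiction and hence $\omega_1=\omega_2=:\omega$. Recovery of every Taylor coefficient of $\mathbf{F}$ then proceeds inductively as in Sections~\ref{sec:tp_smooth_coe} and the proof of Theorem~\ref{thm:tp_part1_mainreslut}, by activating successively higher-order variations.

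The main obstacle is verifying the two-sided CGO bound on $\mathcal{S}_h$ with a H\"older-continuous, rather than merely smooth, leading coefficient: one must absorb the modulus of continuity of $F^{(\ell)}$ without destroying the $\tau^{-n}$ leading order, which is delicate because the error from replacing $F^{(\ell)}(x)$ by $F^{(\ell)}(x_c)$ is only $O(|x-x_c|^{\sigma})$ and integrates via \eqref{eq:tp_CGOEst2} to $O(\tau^{-n-\sigma})$ — precisely enough to be negligible, but no more. A secondary technical point is to confirm that, at the specific variation order used to probe a given Taylor coefficient, the cross-diffusion couplings $\delta_{ij}u_iv_j$ and the prey-taxis divergence terms contribute only through quantities already controlled at lower orders, so that the CGO test function genuinely sees the scalar Laplace operator required to apply Lemma~\ref{lem:tp_wCGOlem}.
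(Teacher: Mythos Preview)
Your proposal is correct and follows essentially the same route as the paper: the paper's own proof consists of a single sentence stating that the argument of Theorem~\ref{thm:tp_part1_mainreslut} carries over verbatim once Lemma~\ref{lem:tp_wCGOlem} is replaced by its conic-corner analogue (which it cites from Lemma~2.2 of \cite{LL2024} rather than re-deriving). Your write-up is considerably more detailed---you sketch why the CGO estimates transfer to $\mathcal{S}_h$, spell out the integral identity and the $\tau^{-n}$ versus $\tau^{-n-\sigma}$ balance, and flag the decoupling of the cross-diffusion terms---but these are exactly the mechanisms already embedded in Sections~\ref{sec:tp_nonsm_domain}--\ref{sec:tp_nonsm_coe}, so the substance is identical.
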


The verification for this assertion follows as in the proof of the main theorem, by replacing Lemma \ref{lem:tp_wCGOlem} with the corresponding one for conic corners (the proof of which can be found in Lemma 2.2 of \cite{LL2024}).

The second corollary asserts that if $\omega\Subset\Omega$ is a smooth domain, the result follows directly from the time-dependent case and it is feasible to retrieve the anomaly and coefficient functions:
\begin{cor}
    Suppose that $\mathbf{F}\in\mathcal{A}$.  Let $\omega\Subset\Omega$ be such that $\omega,\Omega$ are open bounded sets in $\mathbb{R}^n$ with $C^2$ boundary. Assume that $(\mathbf{u}^1,\mathbf{v}^1)$ and $(\mathbf{u}^0,\mathbf{v}^0)$ are classical solutions to \eqref{eq:tp_micro1} and \eqref{eq:tp_micro2} respectively. Then the smooth domain $\omega$ and the coefficient function $\mathbf{F}$ are uniquely determined by the boundary measurement $\mathcal{M}^{+}_{\omega,\mathbf{F}}$. 
\end{cor}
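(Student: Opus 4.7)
The plan is to deduce the corollary by importing the proof strategy of Theorem \ref{thm:tp_smooth_mainthm} into the stationary setting, with the parabolic operator replaced by its elliptic counterpart throughout. Given $\mathbf{F}\in\mathcal{A}$, set $\mathbf{G}(x,t,\mathbf{u},\mathbf{v}):=\mathbf{F}(x,\mathbf{u},\mathbf{v})$; the conditions (i)-(iv) of Definition \ref{def:tp_time_independent} imply the analogous conditions for $\mathcal{B}$, since $C^{2+\alpha}(E)\hookrightarrow C^{2+\alpha,1+\alpha/2}(E\times\mathbb{R})$ for time-independent functions, and the jump of $\mathbf{F}$ across $\partial\omega$ transfers verbatim. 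Thus $\mathbf{G}\in\mathcal{B}$, and any classical solution pair $(\mathbf{u}^k,\mathbf{v}^k)$ of the stationary system lifts to a time-independent classical solution $(\tilde{\mathbf{u}}^k(x,t),\tilde{\mathbf{v}}^k(x,t)):=(\mathbf{u}^k(x),\mathbf{v}^k(x))$ of \eqref{eq:tp_local_smooth1}-\eqref{eq:tp_local_smooth0}.

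Next I would execute the high-order linearization of Section \ref{sec:tp_highvari} in the stationary setting. Suppose toward contradiction that two admissible pairs $(\omega_1,\mathbf{F}_1),(\omega_2,\mathbf{F}_2)$ yield identical $\mathcal{M}^+_{\omega,\mathbf{F}}$. Introducing the Dirichlet data with a small parameter $\varepsilon$ as in subsection \ref{sec:tp_highvari}, the asymptotic expansion produces stationary linearized solutions $u^{(\ell)}_i(x),v^{(\ell)}_j(x)$ solving the elliptic analogues of \eqref{eq:tp_smooth_Linear1.2} and \eqref{eq:tp_smooth_Linear2.2}. The difference $\tilde{u}^{(I)}_i := \tilde{u}^{1(I)}_i-\tilde{u}^{2(I)}_i$ satisfies $-d_i\Delta \tilde{u}^{(I)}_i=0$ in $\Omega$ with homogeneous Dirichlet and Neumann data on $\partial\Omega$, and hence vanishes identically on the connected set $\Omega\setminus(\omega_1\cup\omega_2)$ by elliptic unique continuation. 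Testing the elliptic analogue of \eqref{eq:tp_SmoothPf1} at a boundary point of $\partial\omega_2\setminus\omega_1$ then forces $F^{0(II)}_{1,u_1 u_1}=F^{2,1(II)}_{1,u_1 u_1}$ there, contradicting the admissibility clause of Definition \ref{def:tp_F} and yielding $\omega_1=\omega_2$.

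The main subtlety, rather than a genuine obstacle, is that the parabolic unique continuation of \cite{saut1987unique} invoked throughout subsections \ref{sec:tp_smooth_domain}-\ref{sec:tp_smooth_coe} (in particular in equations \eqref{eq:tp_identyforuI} and \eqref{eq:tp_Fsecond_known}) must be replaced by elliptic unique continuation across the connected Lipschitz component $\Omega\setminus(\omega_1\cup\omega_2)$; this is a classical and in fact stronger fact under the $C^2$ regularity hypothesis on $\partial\omega$ and $\partial\Omega$. After $\omega$ is identified, the inductive recovery of each Taylor coefficient $\mathbf{F}^{(\ell)}|_{\partial\omega}$ proceeds by selecting suitable Dirichlet perturbations $\mathbf{f}_\ell$ that isolate the targeted coefficient, exactly as in subsection \ref{sec:tp_smooth_coe}, which completes the proof that both $\omega$ and $\mathbf{F}|_{\partial\omega}$ are uniquely determined.
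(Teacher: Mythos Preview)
Your proposal is correct and follows essentially the same approach as the paper, which dispatches the corollary in a single sentence by noting that the stationary result is a direct consequence of Theorem \ref{thm:tp_smooth_mainthm} once the equilibrium problem is viewed as the time-independent instance of the parabolic one. Your write-up is in fact more careful than the paper's, since you explicitly replace the parabolic unique continuation by its elliptic counterpart and re-run the high-order linearization in the stationary setting rather than simply invoking the parabolic theorem verbatim.
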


This is a direct result of Theorem \ref{thm:tp_smooth_mainthm}, since the dynamic establishment naturally extends to the corresponding equilibrium state under the assumption of Theorem \ref{thm:tp_smooth_mainthm}.

One intriguing observation is the presence of two distinct scenarios when $\mathbf{F}=0$ within $\omega$ or when $\mathbf{F}=0$ outside of $\omega$. In the former case, $\omega$ signifies habitat degradation where no species can survive. In contrast, in the latter case, $\omega$ may represent an enclosed area, suggesting a high likelihood of animals being confined or microorganisms akin to those in a petri dish.

\subsection{Recovery of the polyhedral domain}\label{sec:tp_nonsm_domain}
In this subsection, we prove Theorem \ref{thm:tp_part1_mainreslut}, beginning with the recovery of the polyhedral domain. First, we introduce a key auxiliary lemma; its proof follows that of Theorem 4.1 in \cite{LL2024}, adapted to the case of polyhedral corners.

\begin{lem}\label{lem:tp_AuxThm}
    Let $\omega\Subset\Omega$ be the bounded Lipschitz domain such that $\Omega\backslash\bar{\omega}$ is connected, with sectorial corner $\mathcal{K}_h$. For $P\in L^2(\Omega)$, suppose that $q_1,q_0$ are $C^\gamma$ H\"older-continuous for some $\gamma\in(0,1)$ with respect to $x\in \omega$, such that $q_1\neq q_0$ in $\mathcal{K}_h$. For any function $h\in C^1(\Omega)$, consider the following system of equations for $w_m\in C^{2+\alpha}(\mathcal{K}_h)$, $m=1,0$:
    \begin{equation}\label{eq:tp_AuxThmEq}
    \begin{cases}
		-D \Delta w_m(x)+ P + q_m(x)h(x)= 0& \quad \text{ in } \mathcal{K}_h,\\
  \partial w_1(x)=\partial w_0(x) &\quad \text{ on }\partial\mathcal{K}_h\backslash\partial B_h,\\
        w_1(x)=w_0(x) &\quad \text{ on }\partial\mathcal{K}_h\backslash\partial B_h.
    \end{cases}  	
\end{equation}
Then, it must hold that \[h(x_c) =0,\] where $x_c$ is the apex of $\mathcal{K}_h$.
\end{lem}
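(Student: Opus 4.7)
The plan is to reduce the system to a single equation for the difference $W := w_1 - w_0$ and then pair it against the harmonic CGO solution from Lemma \ref{lem:tp_wCGOlem} to extract the value of $h$ at the apex $x_c$. First I would subtract the two equations in \eqref{eq:tp_AuxThmEq}: the load $P$ cancels and one obtains
\begin{equation*}
-D\Delta W + Q(x)h(x) = 0 \quad \text{in } \mathcal{K}_h, \qquad W = \partial_\nu W = 0 \quad \text{on } \partial\mathcal{K}_h\setminus\partial B_h,
\end{equation*}
where $Q := q_1 - q_0$ is $C^\gamma$ H\"older-continuous on $\overline{\omega}$ and $Q(x_c)\neq 0$ by the hypothesis $q_1\neq q_0$ in $\mathcal{K}_h$ together with continuity.

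Next, for each large $\tau>0$, let $\phi_\tau(x) := e^{\tau(\xi+i\xi^\perp)\cdot(x-x_c)}$ be the harmonic CGO from Lemma \ref{lem:tp_wCGOlem}, with the direction $\xi\in\mathbb{S}^{n-1}$ chosen so that \eqref{eq:tp_CGOCond} holds on $\mathcal{K}_h$. Multiplying the equation for $W$ by $\phi_\tau$, integrating over $\mathcal{K}_h$, and applying Green's second identity (using $\Delta\phi_\tau=0$) yields
\begin{equation*}
\int_{\mathcal{K}_h} Q(x)h(x)\phi_\tau(x)\,dx \;=\; D\int_{\partial\mathcal{K}_h}\bigl(\phi_\tau\,\partial_\nu W - W\,\partial_\nu\phi_\tau\bigr)\,dS.
\end{equation*}
The zero Cauchy data on $\partial\mathcal{K}_h\setminus\partial B_h$ kill every contribution outside of $\partial\mathcal{K}_h\cap\partial B_h$, and on this spherical cap the Cauchy--Schwarz inequality combined with the $C^{2+\alpha}$ bound on $W$ together with the decay estimates \eqref{eq:tp_CGOEst3}--\eqref{eq:tp_CGOEst4} controls the right-hand side by $C\tau e^{-\rho h\tau}$.

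For the bulk side I would split
\begin{equation*}
\int_{\mathcal{K}_h} Q(x)h(x)\phi_\tau\,dx \;=\; Q(x_c)h(x_c)\int_{\mathcal{K}_h}\phi_\tau\,dx \;+\; \int_{\mathcal{K}_h}\bigl[Q(x)h(x) - Q(x_c)h(x_c)\bigr]\phi_\tau\,dx.
\end{equation*}
By \eqref{eq:tp_CGOEst1}, the first piece has modulus at least $C_{\mathcal{K}_h}|Q(x_c)h(x_c)|\tau^{-n}$ up to an exponentially small correction. For the remainder, the regularity $Q\in C^\gamma$ and $h\in C^1$ gives $|Q(x)h(x)-Q(x_c)h(x_c)|\lesssim|x-x_c|^\gamma$ near $x_c$, so by \eqref{eq:tp_CGOEst2} its modulus is $\lesssim \tau^{-(\gamma+n)}$ up to exponentially small terms. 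Collecting both sides and multiplying through by $\tau^n$ leads to
\begin{equation*}
C_{\mathcal{K}_h}|Q(x_c)h(x_c)| \;\lesssim\; \tau^{-\gamma} + \tau^{n+1}e^{-\rho h\tau}.
\end{equation*}

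Sending $\tau\to\infty$ forces $Q(x_c)h(x_c)=0$, and since $Q(x_c)\neq 0$ this gives $h(x_c)=0$. The main technical point is that the boundary contributions along $\partial\mathcal{K}_h\cap\partial B_h$, where no cancellation from the Cauchy data is available, must decay strictly faster in $\tau$ than the polynomial rate $\tau^{-n}$ of the leading bulk term; this is precisely what the exponential decay in \eqref{eq:tp_CGOEst3}--\eqref{eq:tp_CGOEst4} ensures, so any polynomial prefactor $\tau^{n+1}$ is eventually defeated by $e^{-\rho h\tau}$, and the argument closes.
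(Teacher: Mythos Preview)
Your proposal is correct and follows essentially the same route the paper intends: the paper does not spell out the proof but defers to Theorem~4.1 of \cite{LL2024} adapted to polyhedral corners, which is precisely the CGO-test-function argument you outline --- subtract to obtain $W$ with vanishing Cauchy data on the flat faces, pair against the harmonic exponential of Lemma~\ref{lem:tp_wCGOlem}, use \eqref{eq:tp_CGOEst1}--\eqref{eq:tp_CGOEst2} to isolate $Q(x_c)h(x_c)\tau^{-n}$ as the leading bulk contribution, and kill the boundary remainder on the spherical cap via \eqref{eq:tp_CGOEst3}--\eqref{eq:tp_CGOEst4}. The only point worth tightening is the passage from ``$q_1\neq q_0$ in $\mathcal{K}_h$'' to $Q(x_c)\neq 0$; in the paper's applications this is always imposed explicitly at the apex, so your reading is the intended one.
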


We now proceed with the proof of Theorem \ref{thm:tp_part1_mainreslut}. Arguing by contradiction, there exist two distinct polyhedral inclusions, $\omega_1 \neq \omega_2$. Since both are polyhedral, there exists a corner $\mathcal{K}$ of $\omega_1$ such that $\mathcal{K} \Subset \Omega \backslash \overline{\omega_2}$.
Corresponding to each domain $\omega_m$ ($m=1,2$), let $\mathbf{F} \in \mathcal{A}$ with forcing terms $\mathbf{F}^0$ and $\mathbf{F}^m$, respectively. Following the approach in Section \ref{sec:tp_smooth_domain}, we define the following extended functions: $\mathbf{u}^{1}$ on $\omega_2$ as $\mathbf{u}^{1}\vert_{ext}$, extension of $\mathbf{u}^{2}$ on $\omega_1$ as $\mathbf{u}^{2}\vert_{ext}$, and $\mathbf{u}^{0}$ on $(\omega_1\cup \omega_2)\backslash B_\epsilon$ as $\mathbf{u}^{0}\vert_{ext}$, for a small ball $B_\epsilon\subset\omega_1\cup \omega_2$ of radius $\epsilon$. We then define the functions as follows:
    \begin{equation}\label{eq:tp_tildeEXT2}
     \tilde{\mathbf{u}}^{1}=\mathbf{u}^{0}+(\mathbf{u}^{1}-\mathbf{u}^{0}\vert_{ext})\chi_{\omega_1},\ \tilde{\mathbf{u}}^{2}=\mathbf{u}^{0}+(\mathbf{u}^{2}\vert_{ext}-\mathbf{u}^{0}\vert_{ext})\chi_{\omega_1};   
    \end{equation}
    and
    \begin{equation}\label{eq:tp_hatEXT2}
        \hat{\mathbf{u}}^{1}=\mathbf{u}^{0}+(\mathbf{u}^{1}\vert_{ext}-\mathbf{u}^{0}\vert_{ext})\chi_{\omega_2},\ \hat{\mathbf{u}}^{2}=\mathbf{u}^{0}+(\mathbf{u}^{2}-\mathbf{u}^{0}\vert_{ext})\chi_{\omega_2}.
    \end{equation}
And the coefficient function $\mathbf{F}$ at any order $k=1,2,\dots$ satisfies:
\begin{equation}\label{eq:tp_FExt}F^{1(k)}_{i}=F^{0(k)}_{i} + \left(F^{1,1(k)}_{i}-\left.F^{0(k)}_{i}\right\vert_{ext}\right)\chi_{\omega_1},\quad F^{2(k)}_{i}=F^{0(k)}_{i} + \left(F^{2,1(k)}_{i}-\left.F^{0(k)}_{i}\right\vert_{ext}\right)\chi_{\omega_2},\end{equation}

For $\mathbf{F}^{m,1}\in\mathcal{A}_{\omega_{m}}$ ($m=1,2$) and $\mathbf{F}^0 \in\mathcal{A}_{(\Omega\backslash\omega_{1})\cup(\Omega\backslash\omega_{2})}$, we can conduct higher-order variation separately for $(\mathbf{u}^1,\mathbf{v}^1)$ and $(\mathbf{u}^0,\mathbf{v}^0)$ in $\mathcal{K}_h$ and $B_h\backslash\mathcal{K}_h$ respectively. Taking $\mathbf{F}\in\mathcal{A}_E$ for some compact subset $E$ of $\Omega$, the expansions for solutions to \eqref{eq:tp_mainstationary} are defined as follows:
\begin{equation}\notag
    \mathbf{f}(x;\varepsilon)=\mathbf{u}_{0}+\varepsilon \mathbf{f}_{1}(x)+\frac{1}{2}\varepsilon^2 \mathbf{f}_{2}(x)+ \tilde{\mathbf{f}}(x;\varepsilon),
\end{equation}
\begin{equation}\notag
    \mathbf{g}(x;\varepsilon)=\mathbf{v}_{0}+\varepsilon \mathbf{g}_{1}(x)+\frac{1}{2}\varepsilon^2 \mathbf{g}_{2}(x)+ \tilde{\mathbf{g}}(x;\varepsilon),
\end{equation}
where $\mathbf{f}_{1}, \mathbf{f}_{2}, \mathbf{g}_{1}, \mathbf{g}_{2} \in [C^{2+\alpha}(E)]^N$, and $\tilde{\mathbf{f}}(x;\epsilon), \tilde{\mathbf{g}}(x;\epsilon)$ satisfy
\[\frac{1}{|\varepsilon|^3}\norm{\tilde{\mathbf{f}}(x;\epsilon)}_{[C^{2+\alpha}(E)]^N}=\frac{1}{|\varepsilon|^3}\norm{\mathbf{f}(x;\varepsilon)-\mathbf{u}_{0}-\varepsilon \mathbf{f}_{1}(x)-\frac{1}{2}\varepsilon^2 \mathbf{f}_{2}(x)}_{[C^{2+\alpha}(E)]^N}\to0
\]
and 
\[\frac{1}{|\varepsilon|^3}\norm{\tilde{\mathbf{g}}(x;\epsilon)}_{[C^{2+\alpha}(E)]^N}=\frac{1}{|\varepsilon|^3}\norm{\mathbf{g}(x;\varepsilon)-\mathbf{v}_{0}-\varepsilon \mathbf{g}_{1}(x)-\frac{1}{2}\varepsilon^2 \mathbf{g}_{2}(x)}_{[C^{2+\alpha}(E)]^N}\to0
\]
respectively, and both the convergences are uniformly in $\varepsilon$. 
When $\mathbf{u}_{0}=\mathbf{0},$ we ask $\mathbf{f}_{1} \geq 0,$ thus $\mathbf{f}$ maintains non-negativity as $\varepsilon \to 0.$ Similarly, for $\mathbf{v}_{0}=\mathbf{0},$ we ask $\mathbf{g}_{1} \geq 0$.

Then the first-order variation system is given by:
\begin{equation}\label{eq:tp_space_Linear1}
  \begin{cases}
    -d_{i}\Delta\left( u^{(I)}_{i}(x) +\sum_{j=1}^M\delta_{ij} u^{(I)}_i(x)v_{j,0}(x)+\sum_{j=1}^M\delta_{ij} u_{i,0}(x)v_{j}^{(I)}(x)\right) =0 & \text{ in }  E,\\
    -\delta_{j}\Delta v^{(I)}_{j}(x)=\sum_{i=1}^N\chi_j^i\nabla \cdot (v_{j,0}\nabla u_i^{(I)}+v_j^{(I)}\nabla u_{i,0}) & \text{ in }  E,\\
    u^{(I)}_i(x)=f_{1,i}(x), \  v^{(I)}_j(x)=g_{1,j}(x) & \text{ on } \partial E,
  \end{cases}
\end{equation}
where $i=1,2,\dots,N$ and $j=1,2,\dots,M$ for \eqref{eq:tp_mainstationary}.

Consider the base solution $(\mathbf{u}_0,\mathbf{0})$ to \eqref{eq:tp_mainstationary}, where $\mathbf{u}_0$ is a non-negative constant vector. Choosing $\mathbf{g}_{1}=\mathbf{0}$, the uniqueness of solutions to elliptic equations implies $\mathbf{v}^{(I)}=\mathbf{0}$. Consequently, \eqref{eq:tp_space_Linear1} simplifies to
\begin{equation}\label{eq:tp_space_Linear1.2}
  \begin{cases}
    -d_{i}\Delta u^{(I)}_{i}(x)  =0 & \text{ in }  E,\\
    -\delta_{j}\Delta v^{(I)}_{j}(x)=0 & \text{ in }  E,\\
    u^{(I)}_i(x)=f_{1,i}(x), \  v^{(I)}_j(x)=0 & \text{ on } \partial E.
  \end{cases}
\end{equation}

Therefore, within the neighborhood $B_h$ of the corner $\mathcal{K}_h$, the first-order terms $u^{1(I)}_i$ and $u^{0(I)}_i$ must satisfy the following transmission problem:
\begin{equation}\notag
    \begin{cases}
		 -d_i \Delta u^{1(I)}_i(x)= 0 & \quad\text{ in }  \mathcal{K}_h,\\
         -d_i \Delta u^{0(I)}_i(x)= 0 & \quad\text{ in }  B_h\backslash\mathcal{K}_h,\\
         \partial_{\nu} u^{1(I)}_{i}(x)=\partial_{\nu} u^{0(I)}_{i}(x) &\quad \text{ on }\partial\mathcal{K}_h\backslash\partial B_h,\\
         u^{1(I)}_{i}(x)=u^{0(I)}_{i}(x) &\quad \text{ on }\partial\mathcal{K}_h\backslash\partial B_h,\\
         u^{1(I)}_i|_{ext}(x)=u^{0(I)}_i(x) = f_{1,i}(x) &\quad \text{ on }\partial\Omega,
    \end{cases}
\end{equation}
when $\mathcal{M}_{\omega_1,\mathbf{F}^1}=\mathcal{M}_{\omega_2,\mathbf{F}^2}$. Hence, it is clear that $u^{1(I)}_i(x) = u^{0(I)}_i(x)$ on $\partial\omega_1$ for each $i$. Since $u^{1(I)}_i, u^{0(I)}_i$ are both $C^{2+\alpha}$ functions and coincide on the boundary, $\tilde{u}^{1(I)}_i\in C^{2+\alpha}(\Omega)$.

Next, we consider the case of $\ell=2$. Choosing $\mathbf{g}_2=\mathbf{0}$, the second-order variation system for \eqref{eq:tp_mainstationary} becomes: 
\begin{equation}\notag
    \begin{cases}
		-d_{i}\Delta u^{(II)}_{i}
=\sum\limits^{N}_{h =1} \sum\limits^{N}_{\substack{k =1, \\  k\neq h}} \left[
      (F^{(II)}_{i,u_{k} u_h}+F^{(II)}_{i,u_h u_{k}}) u^{(I)}_h u^{(I)}_{k}+  F^{(II)}_{i,u_{h} u_{h}} (u^{(I)}_{h})^2 \right]
& \text{ in } E,\\
      -\delta_j \Delta v^{(II)}_{j}=0
      & \text{ in } E,\\
		u^{(II)}_i=2f_{2,i},\  v^{(II)}_j=0
  & \text{ on } \partial E.\\
    \end{cases}  	
\end{equation}

Now $u^{1(II)}_i, u^{0(II)}_i$ satisfy:
\begin{equation}\label{eq:tp_sec4_u2}
    \begin{cases}
		-d_i \Delta u^{1(II)}_i-\sum\limits^{N}_{h =1} \sum\limits^{N}_{\substack{k =1, \\  k\neq h}} \big[
      (F^{1,1(II)}_{i,u_{k} u_h}+F^{1,1(II)}_{i,u_h u_{k}}) \tilde{u}^{1(I)}_{i,h} \tilde{u}^{1(I)}_{i,k}+  F^{1,1(II)}_{i,u_{h} u_{h}} (\tilde{u}^{1(I)}_{i,h})^2   \big]=0 & \quad \text{ in } \mathcal{K}_h,\\
        -d_i \Delta u^{0(II)}_i-\sum\limits^{N}_{h =1} \sum\limits^{N}_{\substack{k =1, \\  k\neq h}} \big[
      (F^{0(II)}_{i,u_{k} u_h}+F^{0(II)}_{i,u_h u_{k}}) \tilde{u}^{1(I)}_{i,h} \tilde{u}^{1(I)}_{i,k}+  F^{0(II)}_{i,u_{h} u_{h}} (\tilde{u}^{1(I)}_{i,h})^2  \big]=0 & \quad \text{ in } B_h\backslash\mathcal{K}_h,\\
        u_i^{1(II)}(x)=u_i^{0(II)}(x),\quad  \partial_{\nu} u^{1(II)}_{i}(x)=\partial_{\nu} u^{0(II)}_{i}(x)  &\quad \text{ on }\partial\mathcal{K}_h\backslash\partial B_h.
    \end{cases}  	
\end{equation}
The cross-diffusion terms vanishes as we chose the boundary function $\mathbf{g}$ as $\mathbf{0}$.

Notice that because $u^{1(I)}_i(x)$ is smooth and already fixed from our previous discussion for the first order linearization, \eqref{eq:tp_sec4_u2} is simply an elliptic equation. Therefore, we can apply the unique continuation principle for elliptic equations \cite{koch2001carleman} to obtain
\begin{equation}\label{eq:tp_Uniform_proofF2}
    \begin{cases}
		-d_i \Delta u^{1(II)}_i-\sum\limits^{N}_{h =1} \sum\limits^{N}_{\substack{k =1, \\  k\neq h}} \big[
      (F^{1,1(II)}_{i,u_{k} u_h}+F^{1,1(II)}_{i,u_h u_{k}}) u^{1(I)}_{i,h} u^{1(I)}_{i,k}+  F^{1,1(II)}_{i,u_{h} u_{h}} (u^{1(I)}_{i,h})^2   \big]=0 & \quad \text{ in } \mathcal{K}_h,\\
        -d_i \Delta u^{0(II)}_i-\sum\limits^{N}_{h =1} \sum\limits^{N}_{\substack{k =1, \\  k\neq h}} \big[
      (F^{0(II)}_{i,u_{k} u_h}+F^{0(II)}_{i,u_h u_{k}}) u^{1(I)}_{i,h} u^{1(I)}_{i,k}+  F^{0(II)}_{i,u_{h} u_{h}} (u^{1(I)}_{i,h})^2  \big]=0 & \quad \text{ in } \mathcal{K}_h,\\
        u_i^{1(II)}(x)=u_i^{0(II)}(x),\quad\  \partial_{\nu} u^{1(II)}_{i}(x)=\partial_{\nu} u^{0(II)}_{i}(x) &\quad \text{ on }\partial\mathcal{K}_h\backslash\partial B_h.
    \end{cases}  	
\end{equation}

Similar to subsection \ref{sec:tp_smooth_domain}, we provide a different input $\mathbf{f}_1$ to choose appropriate $\mathbf{u}^{1(I)}$. We begin with the case where $\mathbf{f}_1=(1,0,\cdots,0)$. Then, by the uniqueness of solutions to elliptic equations, it must be that $\mathbf{u}^{1(I)}(x)=(1,0,\cdots,0)$, which has a non-trivial element only in the first position, if $F^{m(II)}_{u_1u_1}$ has a jump singularity. And we can apply the same method to address the case where $F^{m(II)}_{u_iu_i}$ is unknown for $i=2,\dots,N$.

Under this assumption, \eqref{eq:tp_Uniform_proofF2} becomes:
\begin{equation}\label{eq:tp_case_recoverF11}
    \begin{cases}
		-d_1 \Delta u^{1(II)}_1-F^{1(II)}_{1,u_1u_1}(u^{1(I)})^2 = -d_1 \Delta u^{0(II)}_1-F^{0(II)}_{1,u_1u_1}(u^{1(I)})^2=0 &\quad \text{ in } \mathcal{K}_h,\\
        u_1^{1(II)}(x)=u_1^{0(II)}(x) &\quad \text{ on }\partial\mathcal{K}_h\backslash\partial B_h.
    \end{cases} 
\end{equation}

Denote $w_n(x)=u^{n(II)}_1(x)(n=0,1) $, $P=0$, $q_n=F^{n(II)}_{1,u_1 u_1}$ and $h(x)=(u^{1(I)}_1)^2(x)$, \eqref{eq:tp_case_recoverF11} can be transformed into \eqref{eq:tp_AuxThmEq}. If 
\[F^{1,1(II)}_{1,u_1u_1}(x_c) \neq F^{0(II)}_{1,u_1u_1}(x_c), \]
there must be $h(x_c)=(u^{1(I)}(x_c))^2=0$ for the apex $x_c$ of $\mathcal{K}$. Once more, since $u^{1(I)}(x) \neq 0$, contradiction happens. And we arrive at the conclusion that $\omega_1=\omega_2$.

Similarly, if $F^{m(II)}_{u_ku_h}$ $(k\neq h)$ has a jump singularity, we can select $\tilde{\mathbf{u}}^{m(I)}$ which has non-trivial element only at the $h$ and $k$ positions. And we take $\tilde{\mathbf{u}}^{m(I)}=(1,1,0,\cdots,0)$ as an example. Now \eqref{eq:tp_Uniform_proofF2} becomes
\begin{equation}\notag
    \begin{cases}
		-d_1 \Delta u^{1(II)}_1-2F^{1,1(II)}_{1,u_1u_2}u^{1(I)}_1u^{1(I)}_2 = -d_1 \Delta u^{0(II)}_1-2F^{0(II)}_{1,u_1u_2}u^{1(I)}_1u^{1(I)}_2=0 &\quad \text{ in } \mathcal{K}_h,\\
        u_1^{1(II)}(x)=u_1^{0(II)}(x) &\quad \text{ on }\partial\mathcal{K}_h\backslash\partial B_h.
    \end{cases} 
\end{equation}
This formulation is equivalent to \eqref{eq:tp_AuxThmEq} with $w_n(x)=u^{n(II)}_1(x)(n=0,1) $, $P=0$, $q_n=F^{n(II)}_{1,u_2 u_1}$ and $h(x)=\big(u^{1(I)}_1 u^{1(I)}_2\big)(x)$. Since $F^{m(II)}_{1,u_2 u_1}\in\mathcal{A}$ and $u^{1(I)}_1(x), u^{1(I)}_2(x) \in C^{2+\alpha}(\mathcal{K}_h)$ under assumption, if 
\[F^{1,1(II)}_{1,u_2u_1}(x_c) \neq F^{0(II)}_{1,u_2u_1}(x_c), \]
there must be $h(x_c)=u^{1(I)}_2(x_c)u^{1(I)}_1(x_c)=0$ for the apex $x_c$ of $\mathcal{K}$ to satisfy the Theorem \ref{lem:tp_AuxThm}. 
However, we have chosen the initial function $f_{1,1}(x)$ and $f_{1,2}(x)$ in such a way that based on the maximum principle for the elliptic equation, neither $u^{1(I)}_1=0$ nor $u^{1(I)}_2=0$ holds on $\partial\omega_1\supset\partial\mathcal{K}$. This leads to a contradiction, implying that $\omega_1=\omega_2$. 
Thus, we have once again established the uniqueness of $\omega$ uniquely.

Hence we can uniquely determine $\omega$ if any of the second-order coefficients of $\mathbf{F}$ has a jump singularity.
However, if all the second-order coefficient functions are identical at the point $x_c$, then $u^{1(II)}_i(x) = u^{0(II)}_i(x)$ on $\partial\omega_1$ for each $i$ and $\tilde{u}^{1(II)}_i\in C^{2+\alpha}(\Omega)$. In this case, the argument must be extended to higher-order variations. To analyze higher-order Taylor coefficients of $\mathbf{F}$, we consider the linearized systems of order $\ell\in\mathbb{N}$, with $l$-th order variations defined by
\begin{equation}\notag
    u_i^{(\ell)}(x):=\partial_{\varepsilon}^{\ell}u_i|_{\varepsilon=0},\  v_j^{(\ell)}(x):=\partial_{\varepsilon}^{\ell}v_j|_{\varepsilon=0} \quad\text{for }i=1,\dots,N; j=1,\dots,M.
\end{equation}
This recursive approach yields a sequence of systems, each of which can be applied again when a discontinuity is present in the corresponding higher-order Taylor coefficient $\mathbf{F}^{(\ell)}$.

We proceed with the case $\ell=3$. Under the assumptions above, the equality of measurement maps $\mathcal{M}_{\omega_1,\mathbf{F}^1}=\mathcal{M}_{\omega_2,\mathbf{F}^2}$ implies $u^{1,1(II)}_i=u^{0(II)}_i$ on $\partial\omega$.  Since the first- and second-order terms $u^{1(I)}_i, u^{1(II)}_i(i=1,\dots,N)$ are now fixed and smooth, the system governing the third-order variations $u^{n(III)}_i, u^{n(III)}_i (n=0,1)$ reduces to an elliptic equation whose remaining terms depend only on these lower-order solutions. Consequently, the unique continuation principle for elliptic equations applies once more, leading to a system of the form \eqref{eq:tp_AuxThmEq} with sufficiently regular coefficients.   A contradiction again arises if the third-order coefficients of $\mathbf{F}$ possess a jump singularity, thereby uniquely determining $\omega$.

The argument for orders $\ell (\ell \geq 4)$ follows identically. Hence, $\omega$ is uniquely determined whenever $\mathbf{F}$ has a jump singularity. Conversely, if $\mathbf{F}$ does not have a jump singularity, by definition, no such $\omega$ exists.

The proof is complete.

\subsection{Recovery of the coefficient functions}\label{sec:tp_nonsm_coe}
Having established the unique identifiability result for the intrinsic structure of anomalies in the given biological model, we proceed to consider the recovery for the coefficient function $\mathbf{F}$. In this subsection, we keep the notations in \eqref{eq:tp_tildeEXT2}, \eqref{eq:tp_hatEXT2} and \eqref{eq:tp_FExt}.

Repeating the argument above, from the first order linearisation \eqref{eq:tp_space_Linear1.2}, $u^{1(I)}_i$, $u^{2(I)}_i$ satisfy the following equations
\begin{equation}\notag
    \begin{cases}
		 -d_i \Delta u^{1(I)}_i(x)= -d_i \Delta u^{2(I)}_i(x)=0 & \quad\text{ in }  \mathcal{K}_h,\\
   \partial_{\nu} u^{1(I)}_{1}(x)=\partial_{\nu} u^{2(I)}_{1}(x) &\quad \text{ on }\partial\mathcal{K}_h\backslash\partial B_h,\\
         u^{1(I)}_{1}(x)=u^{2(I)}_{1}(x) &\quad \text{ on }\partial\mathcal{K}_h\backslash\partial B_h,\\
         u^{1(I)}_1(x)=u^{2(I)}_1(x) = f_{1,1}(x)>0 &\quad \text{ on }\partial\Omega.
    \end{cases}
\end{equation}
Then, it is clear that $u^{1(I)}_i = u^{2(I)}_i$ when the same boundary function $f_{1,1}$ is selected, and we employ $\bar{u}^{(I)}_1(x)$ for a unified representation. Given the positivity of the initial function $f_{1,i}$, we can infer the positivity of $\bar{u}^{(I)}_i(x)$ for each $i$.

To recover the second-order of the unknown coefficient function $\mathbf{F}$, we need to proceed to the system for $u^{1(II)}_i$ and $u^{2(II)}_i$, which satisfies:
\begin{equation}\label{eq:tp_sec5_u2}
    \begin{cases}
		-d_i \Delta u^{1(II)}_i-\sum\limits^{N}_{h =1} \sum\limits^{N}_{\substack{k =1, \\  k\neq h}} \big[
      (F^{1,1(II)}_{i,u_{k} u_h}+F^{1,1(II)}_{i,u_h u_{k}}) \bar{u}^{1(I)}_{h} \bar{u}^{1(I)}_{k}+  F^{1,1(II)}_{i,u_{h} u_{h}} (\bar{u}^{1(I)}_{h})^2   \big]=0 & \quad \text{ in } \mathcal{K}_h,\\
        -d_i \Delta u^{2(II)}_i-\sum\limits^{N}_{h =1} \sum\limits^{N}_{\substack{k =1, \\  k\neq h}} \big[
      (F^{2,1(II)}_{i,u_{k} u_h}+F^{2,1(II)}_{i,u_h u_{k}}) \bar{u}^{1(I)}_{h} \bar{u}^{1(I)}_{k}+  F^{2,1(II)}_{i,u_{h} u_{h}} (\bar{u}^{1(I)}_{h})^2  \big]=0 & \quad \text{ in } \mathcal{K}_h,\\
        u_i^{1(II)}(x)=u_i^{2(II)}(x),\  \partial_{\nu} u^{1(II)}_{i}(x)=\partial_{\nu} u^{2(II)}_{i}(x)  &\quad \text{ on }\partial\mathcal{K}_h\backslash\partial B_h.
    \end{cases}  	
\end{equation}
The cross-diffusion terms vanish as we chose the boundary function $\mathbf{g}$ as $\mathbf{0}$.

Once again, we identify the coefficients by selecting the appropriate $\bar{\mathbf{u}}^{1(I)}$. We start with $\bar{\mathbf{u}}^{1(I)}=(1,0,\cdots,0)$, which is non-trivial only in the first position. From there, we can shift this non-trivial element to recover the other coefficients  $F^{m(II)}_{u_iu_i}$ for $i=2,\cdots,N$.

Then, the equations \eqref{eq:tp_sec5_u2} becomes
\begin{equation}\label{eq:tp_case2_recoverF11}
    \begin{cases}
		-d_1 \Delta u^{1(II)}_1-F^{1,1(II)}_{1,u_1u_1}(\bar{u}^{1(I)}_1)^2 = -d_1 \Delta u^{2(II)}_1-F^{2,1(II)}_{1,u_1u_1}(\bar{u}^{1(I)}_1)^2=0 &\quad \text{ in } \mathcal{K}_h,\\
        u_1^{1(II)}(x)=u_1^{2(II)}(x) &\quad \text{ on }\partial\mathcal{K}_h\backslash\partial B_h.
    \end{cases} 
\end{equation}

Denote $w_n(x)=u^{n(II)}_1(x)(n=1,2) $, $P=0$, $q_n=F^{n,1(II)}_{1,u_1 u_1}$ and $h(x)=(\bar{u}^{1(I)}_1)^2(x)$, \eqref{eq:tp_case2_recoverF11} is simply \eqref{eq:tp_AuxThmEq}. If 
\[F^{1,1(II)}_{1,u_1u_1}(x_c) \neq F^{2,1(II)}_{1,u_1u_1}(x_c), \]
there must be $h(x_c)=(\bar{u}^{1(I)}(x_c))^2=0$ for the apex $x_c$ of $\mathcal{K}$, but this contradicts our choice of $f_{1,1}=1$ which gives $\bar{u}^{1(I)}_1=1$. Hence, it must be that $F^{1,1(II)}_{1,u_1u_1}(x_c) =F^{2,1(II)}_{1,u_1u_1}(x_c)$, then after extending to the part outside $\omega$, we arrive at $F^{1(II)}_{1,u_1u_1}(x_c) =F^{2(II)}_{1,u_1u_1}(x_c)$. 

Similarly, if $F^{n(II)}_{u_ku_h}$ $(k\neq h)$ is unknown, we can select $\tilde{\mathbf{u}}^{n(I)}$ which has non-trivial element only at the $h$ and $k$ positions. And we take $\tilde{\mathbf{u}}^{n(I)}=(1,1,0,\cdots,0)$ as an example. Now \eqref{eq:tp_Uniform_proofF2} becomes
\begin{equation}\notag
    \begin{cases}
		-d_1 \Delta u^{1(II)}_1-2F^{1,1(II)}_{1,u_1u_2}\bar{u}^{1(I)}_1\bar{u}^{1(I)}_2 = -d_1 \Delta u^{2(II)}_1-2F^{2,1(II)}_{1,u_1u_2}\bar{u}^{1(I)}_1\bar{u}^{1(I)}_2=0 &\quad \text{ in } \mathcal{K}_h,\\
        u_1^{1(II)}(x)=u_1^{2(II)}(x) &\quad \text{ on }\partial\mathcal{K}_h\backslash\partial B_h.
    \end{cases} 
\end{equation}
This formulation is equivalent to \eqref{eq:tp_AuxThmEq} with $w_n(x)=u^{n(II)}_1(x)(n=1,2) $, $P=0$, $q_n=F^{n,1(II)}_{1,u_2 u_1}$ and $h(x)=\big(\bar{u}^{1(I)}_1 \bar{u}^{1(I)}_2\big)(x)$. Since $F^{n(II)}_{1,u_2 u_1}\in\mathcal{A}$ and $\bar{u}^{1(I)}_1(x), \bar{u}^{1(I)}_2(x) \in C^{2+\alpha}(\mathcal{K}_h)$ under assumption, if 
\[F^{1,1(II)}_{1,u_2u_1}(x_c) \neq F^{2,1(II)}_{1,u_2u_1}(x_c), \]
there must be $h(x_c)=\bar{u}^{1(I)}_2(x_c)\bar{u}^{1(I)}_1(x_c)=0$ for the apex $x_c$ of $\mathcal{K}$ to satisfy the Theorem \ref{lem:tp_AuxThm}. 
However, we have chosen the initial function $f_{1,1}(x)$ and $f_{1,2}(x)$ in such a way that based on the maximum principle for the elliptic equation, neither $\bar{u}^{1(I)}_1=0$ nor $\bar{u}^{1(I)}_2=0$ holds on $\partial\omega_1\supset\partial\mathcal{K}$. This leads to a contradiction, implying that $F^{1,1(II)}_{1,u_2u_1}(x_c)=F^{2,1(II)}_{1,u_2u_1}(x_c)$. Extending them to the domain outside of $\omega$, we have $F^{1(II)}_{1,u_2u_1}(x_c)=F^{2(II)}_{1,u_2u_1}(x_c)$. 

In this way, by changing different $\bar{\mathbf{u}}^{(I)}$, we have uniquely determined $\mathbf{F}^{(II)}$.

Now, let $\bar{u}^{(II)}_i:=u^{1(II)}_i-u^{2(II)}_i$ for $i=1,\dots,N$. We can obtain the following equations:
\begin{equation}\notag
   \begin{cases}
       -d_i \bar{u}^{(II)}_i(x)=0 & \quad \text{ in } \mathcal{K}_h,\\
        \bar{u}^{(II)}_i(x)=0 &\quad \text{ on }\partial\mathcal{K}_h\backslash\partial B_h,
   \end{cases} 
\end{equation}
which implies us that $u^{1(II)}_i=u^{2(II)}_i$, and we denote it by $\bar{u}^{(II)}_i$.


Similarly, we can recover the higher-order $ (\ell \geq 3)$ variation of $\mathbf{F}$ by repeating the argument. 

The proof is complete. $ \hfill{\square}$

\noindent\textbf{Acknowledgment.} 
The work of H. Liu is supported by NSFC/RGC Joint Research Scheme, N CityU101/21, ANR/RGC Joint Research Scheme, A-CityU203/19, and the Hong Kong RGC General Research Funds (projects 11311122, 11303125 and 11300821). The work of C. W. K. Lo is supported by the National Natural Science Foundation of China (No. 12501660) and the Guangdong Province ``Pearl River" Talent Recruitment Program (Top youth talent) (No. 2024QN11X268).

\bibliographystyle{plain}
\bibliography{reference}

\end{document}